\newtheorem{thm}{Theorem}[section]
\newtheorem{cor}[thm]{Corollary}
\newtheorem{lem}[thm]{Lemma}
\newtheorem{prop}[thm]{Proposition}
\newtheorem{exam}[thm]{Example}
\theoremstyle{definition}
\newtheorem{defn}[thm]{Definition}
\newtheorem{rem}[thm]{Remark}
\newtheorem{que}[thm]{Question}
\numberwithin{equation}{section}
\begin{document}

\title{On  the structure of  $LC$-nilpotent   groups}
\author{M. Amiri,  I. Kashuba, I. Lima \\
}
\footnotetext{E-mail Address: {\tt m.amiri77@gmail.com;\,  	ikashuba@gmail.com\, igor.matematico@gmail.com } }
\date{}
\maketitle

\begin{quote}
{\small \hfill{\rule{13.3cm}{.1mm}\hskip2cm}
\textbf{Abstract.} {For a finite group $G$, let $LC(G)$ be the subgroup generated by elements $x$ such that, for all $y \in G$ and all integers $n$, the order of $x^n y$ divides the least common multiple of the orders of $x$ and $y$. This subgroup is a nilpotent characteristic subgroup of $G$. 
In this article, among other results, we show that a finite solvable group $G$ admits an $LC$-nilpotent series if and only if $G$ does not contain any $2$-Frobenius section  of type $(p, q, p)$. As a consequence of this theorem, we conclude that the algebraic system consisting all $LC$-nilpotent groups forms a variety. Finally, we answer to Question 3.7 from \cite{mohsen} in a more general case.}

{
\noindent{\small {\it \bf 2020 MSC}\,: 20D10, 20D30.}}\\
\noindent{\small {\it \bf Keywords}\,: 
Finite solvable groups, Series and lattices of subgroups.}}\\
\vspace{-3mm}\hfill{\rule{13.3cm}{.1mm}\hskip2cm}
\end{quote}


\maketitle

\section{Introduction}\label{sec1}

 Let $G$ be a periodic group and let $LCM(G)$ be the set of all $x \in G$ such that $o(x^n y)$ divides the least common multiple of $o(x^n)$ and $o(y)$ for all $y \in G$ and all integers $n$. The subgroup generated by $LCM(G)$ is denoted by $LC(G)$. A group $G$ is said to be an $LCM$-group if $G = LCM(G)$.

In \cite{mohsen} the authors introduced the $LCM$-groups and $LC$-series. They prove many results about these groups, for instance
\begin{thm}   Let $G$  be a locally finite  group.
Then 
 $  LC(G)$ is a locally nilpotent subgroup of $G$.
\end{thm}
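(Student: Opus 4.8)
The plan rests on two elementary closure properties of $LCM(G)$, which I would establish first. It is invariant under conjugation: if $x\in LCM(G)$ and $g\in G$, then for all $y\in G$ and all $n$ we get $o\big((x^g)^n\,y^g\big)=o\big((x^ny)^g\big)=o(x^ny)$, which divides $\mathrm{lcm}(o(x^n),o(y))=\mathrm{lcm}(o((x^g)^n),o(y^g))$, and $y\mapsto y^g$ is a bijection of $G$; hence $x^g\in LCM(G)$. It is also closed under powers, since $(x^k)^n=x^{kn}$ makes the condition for $x^k$ a special case of the condition for $x$. Two consequences: $LC(G)=\langle LCM(G)\rangle$ is normal in $G$, and $LCM(G)\cap H\subseteq LCM(H)$ for every subgroup $H\le G$, because restricting $y$ to $H$ only weakens the defining condition.

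Next I would reduce to finite groups. As $G$ is locally finite, it is enough that every finitely generated $F\le LC(G)$ be nilpotent. Such an $F$ lies in $H=\langle x_1,\dots,x_s\rangle$ with the $x_j\in LCM(G)$, and $H$ is finite; by the restriction property $x_j\in LCM(H)$, so $H\le LC(H)\le H$, i.e.\ $LC(H)=H$. Granting the finite case ($LC(H)\le F(H)$) we get $H=F(H)$, so $H$ and hence $F$ is nilpotent. It therefore remains to show: for finite $G$, $LCM(G)\subseteq F(G)$. Given $x\in LCM(G)$, write it as the product of its commuting prime-power parts $x=\prod_p x_p$; each $x_p$ is a power of $x$, so $x_p\in LCM(G)$, and since $F(G)=\prod_p O_p(G)$ it is enough that $x_p\in O_p(G)$ for every $p$. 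So the whole theorem reduces to: if $G$ is finite and $u\in LCM(G)$ is a $p$-element, then $u\in O_p(G)$.

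For this last statement I would look at the normal closure $M=\langle u^G\rangle$, a finite normal subgroup. Every conjugate $u^g$ lies in $LCM(G)\cap M\subseteq LCM(M)$, hence so does every power of every $u^g$, and these are all $p$-elements. I claim every element of $M$ is a $p$-element, by induction on the length of an expression $w=s_1\cdots s_m$ of $w\in M$ as a product of powers of conjugates of $u$: for $m=0$ it is trivial, and otherwise $w''=s_2\cdots s_m$ is a $p$-element by induction while $s_1\in LCM(M)$, so $o(w)=o(s_1w'')$ divides $\mathrm{lcm}(o(s_1),o(w''))$, a power of $p$. Thus $M$ is a normal $p$-subgroup of $G$, so $M\le O_p(G)$ and $u\in O_p(G)$. (Alternatively the same word induction shows $\langle u,u^g\rangle$ is a $p$-group for every $g$, and the Baer--Suzuki theorem then gives $u\in O_p(G)$.)

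I expect the heart of the argument --- and its only genuinely non-formal step --- to be this last word-length induction; it is also where both closure properties of $LCM(G)$ get used, since one must know that every letter $s_i$ of a word representing an element of $\langle u^G\rangle$ is still an $LCM$-element of that subgroup (conjugation handles the $u^g$, powers handle the $s_i=(u^g)^{\pm k}$). Two points I would be careful about: one cannot skip the reduction to prime-power parts, because for a general $x\in LCM(G)$ the induction only shows $\langle x^G\rangle$ has exponent dividing $o(x)$, and a finite group of bounded exponent need not be nilpotent; and the passage to finite subgroups in the locally finite case is genuinely needed, since arguing prime-by-prime inside $LC(G)$ directly would require the subgroups generated by the $p$-elements of $LCM(G)$ for distinct $p$ to commute, which is automatic in a finite group (via $[O_p(G),O_q(G)]=1$) but not obviously so otherwise.
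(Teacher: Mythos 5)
Your proof is correct, and it follows essentially the same route as the paper's machinery: the paper states this theorem as a citation from \cite{mohsen}, but its own Lemma \ref{ces} performs exactly your word-length induction on the normal closure of a $p$-element of $LCM(G)$ to show that $\langle LCM_p(G)\rangle$ is a normal $p$-group, and Proposition \ref{equ} carries out your reduction of a general element to its commuting prime-power parts. Your explicit reduction to finitely generated (hence finite) subgroups via the restriction property $LCM(G)\cap H\subseteq LCM(H)$, and the observation that the naive exponent bound without the prime decomposition would not yield nilpotence, are both sound and match the intended argument.
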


Therefore, as a natural continuation, in this paper we explore the structure  of   $LC$-nilpotent groups defined in \cite{mohsen}.

One of the main motivations for studying the behavior, in periodic groups, of the order of elements on the structure of the group, arises from the celebrated Burnside Problem. Here our focus is to understand how the finite order of the product of the elements can determine the structure of the group. In \cite{mohsen}, it was proven, for a periodic group $G$, that $LCM(G)$ is an $\operatorname{Aut}(G)$-invariant subgroup of $G$ (see Lemma 2.4 in \cite{mohsen}). The main ingredient of our proofs is the fact that the behavior of the $p$ elements in $LCM(G)$ is enough to determine all the information about $LCM(G)$.

Let $G$ be a group.
As defined in \cite{mohsen}, set $LC_1(G)=LC(G)$ and for $i=2,3,\ldots$, define
$LC(\frac{G}{LC_{i-1}(G)})=\frac{LC_{i}(G)}{LC_{i-1}(G)}$.
We say the group $G$ is an $LC$-nilpotent group whenever 
there exists a finite $LC$-series 
\begin{equation}\label{eq11}
LC_0(G)=1\leq LC_1(G)\leq LC_2(G)\leq\ldots\leq LC_k(G)=G
\end{equation}
such that $\frac{LC_i(G)}{LC_{i-1}(G)}$ is a nilpotent group for all $i=1,2,\ldots,k$. In this case, the $LC$-series (\ref{eq11}) is referred to as $LC$-nilpotent series for $G$, and $G$ is called a group of $LC$-class $k$ whenever $LC_{k-1}(G)<LC_k(G)$.
Also, we say $G$ is a group of  maximal $LC$-class whenever $\frac{LC_i(G)}{LC_{i-1}(G)}$ is a prime number for each $i=1,2,\ldots,k$.  It is a simple exercise to show that any finite $p$-group is a $LC$-nilpotent group.

  Recall that a $2$-Frobenius group $G=ABC$, where $A$ and $AB$ are normal
subgroups of $G$, and $AB$ and $BC$ are Frobenius group with kernel $A$, $B$ and
complements $B$, $C$, respectively. 
 In particular, $G$ is said to be a $2$-Frobenius of type $(p, q, p) $ if both $\frac{G}{AB}$ and $A$ are $p$-groups and $B$ is a $q$-group. A section of $G$ is a quotient group $H/K$, where $K \trianglelefteq H \leq G$.
In this paper, we prove the following theorem:
\begin{thm}\label{maa1}
    Let $G$ be a finite group. The following conditions are equivalent:
    \begin{enumerate}
        \item $G$ is an $LC$-nilpotent group.
        \item No section of $G$ contains a $2$-Frobenius subgroup of type $(p, q, p)$.
        \item For all $x, y \in G$, we have $LC(\langle x, y \rangle) \neq 1$.
    \end{enumerate}
\end{thm}

In \cite{mohsen} the following question was posed

\begin{que}[Question 3.7]
    What is the set of all LC-nilpotent groups of class two?
    \label{question1}
\end{que}

The main result of our work provides a more general answer to this question. As another consequence of the above theorem, we observe that the algebraic system consisting all $LC$-nilpotent groups forms a variety. More precisely:
 \begin{cor} 
     Let $\mathfrak{L}$ be the set of all $LC$-nilpotent finite groups.
     Then every subgroup and quotient of a group in $\mathfrak{L}$ also belongs to $\mathfrak{L}$.
     Also, for  $G,H\in \mathfrak{L}$, we have $G\times H\in \mathfrak{L}$.
 \end{cor}
 We have the following chain of inclusions:
\[
\mathit{Nil} \subset \mathit{Sup} \subset \mathfrak{L} \subset \mathit{Sol}
\]
where $\mathit{Nil}$ denotes the set of all finite nilpotent groups, $\mathit{Sup}$ the set of all finite supersolvable groups, and $\mathit{Sol}$ the set of all finite solvable groups. Furthermore, all inclusions are proper.

For a finite group $G$, the vertices of the prime graph $\Gamma(G)$ are the primes that divide $|G|$, and two vertices $p$ and $q$ are connected by an edge   if and only if there is an element of order $pq$ in $G$.
For any prime divisor $p$ of $|G|$, let 
$deg(p)=\sum_{q\mid |G|, \ q\neq p}e(p,q).$

Also, we prove the following theorems about some special class of $LC$-nilpotent groups:
 
 \begin{thm}
  Let $G$ be a finite group. Then $G$ is an $LC$-nilpotent group of maximal $LC$-class $k$ if and only if $k=2$ and 
  $G$ is a non-abelian group of order $pq$ where $p < q$ are prime numbers.
 \end{thm}

\begin{thm} 

     Let $G$ be a finite non-abelian  $p$-group of $LC$-class  $k>1$. Then   for each  $ 0\leq i\leq  \ldots\leq  k-1$,   $\frac{LC_{i+1}(G)}{LC_i(G)}$ is cyclic if and only if  $k=2=p$, and  
  $G \in \{D_{2^n}, SD_{2^n}, Q_{2^m}\}$, where $n \geq 3$ and $m \geq 4$.
 \end{thm}

  \begin{thm} 
     Let $G$ be a finite group. Then $G$ is an $LC$-nilpotent group of $LC$-class $k$ such that $\frac{LC_{i+1}(G)}{LC_i(G)}$ is cyclic for all $i = 0, \ldots, k-1$ if and only if $G = LC(G)H$, where $H$ is a cyclic subgroup of $G$.
   \end{thm}

In what follows, we adopt the notation established in Isaacs' book on finite groups \cite{I}.

\section{$LCM_p(G)$ of \ periodic \ groups}

For a $p$-group $G$ and an integer $n$, we denote the sets $\langle\{x\in G| \ x^{p^{n}}=1\}\rangle$ and $\langle\{x^{p^n}\in G| \ x\in G\}\rangle$ by $\Omega_{n}(G)$ and $\mho_{n}(G)$, respectively. 
Let $CP2$ be the class of finite groups $G$ such that $o(xy)\leq max\{o(x), o(y)\}$ for all $ x\neq y \in G$ \cite{Deb}. 
 
We shall need the following results.

\begin{thm}(Theorem D in \cite{Deb}) \label{a} A finite group $G$ is contained in $CP2$ if and only if one of the following statements holds:

\begin{enumerate}
  \item $G$ is a $p-$group and $\Omega_{n}(G)=\{x\in G\ | \ x^{p^{n}}=1\}$ for all integers $n$.
  \item $G$ is a Frobenius group of order $p^{\alpha}q^{\beta}$, $p<q$, with kernel $F(G)$ of order $p^{\alpha}$ and cyclic complement.
\end{enumerate}

\end{thm}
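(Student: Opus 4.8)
The plan is to prove the two implications separately; the work is in the forward direction. Suppose $G\in CP2$. If some $g\in G$ had order $mn$ with $m,n>1$ and $\gcd(m,n)=1$, then $g^{m}$ and $g^{n}$ would be distinct commuting elements of coprime orders $n$ and $m$, so that $o(g^{m}g^{n})=o(g)=mn>\max\{m,n\}$, contradicting $G\in CP2$. Hence every element of $G$ has prime-power order, and I would now invoke the classification of finite groups with this property (Higman in the solvable case, Suzuki and subsequent authors otherwise): $G$ is then a $p$-group, a Frobenius or $2$-Frobenius group whose kernel and complement have prime-power order, or one of a short explicit list of almost simple groups such as $A_{5}$, $A_{6}$, $PSL(2,7)$, $PSL(2,8)$, $Sz(8)$.

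Next I would cut this list down to the two cases in the statement. Every almost simple group and every $2$-Frobenius group occurring here contains two distinct involutions whose product has odd prime order, so none of them lies in $CP2$; these are discarded. If $G$ is a $p$-group I would show $CP2$ is equivalent to $(1)$: writing $S_{n}=\{x\in G\mid x^{p^{n}}=1\}$ one always has $\Omega_{n}(G)\supseteq S_{n}$, with equality exactly when $S_{n}$ is a subgroup, and if $S_{n}$ were not a subgroup we could choose $x,y\in S_{n}$ with $xy\notin S_{n}$, whence $o(xy)>p^{n}\ge\max\{o(x),o(y)\}$, a contradiction. Finally, if $G$ is Frobenius with kernel $K$ and complement $H$ of prime-power orders, then $K$ is nilpotent (Thompson) and still has all element orders prime powers, hence is a $p$-group, say $|K|=p^{\alpha}$, and $H$ has order $q^{\beta}$. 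The decisive computation enters here: picking $z\in K$ of order $\exp(K)$ and $h\in H$ of order $q$, the distinct elements $zh$ and $h^{-1}$ satisfy $o(zh)=o(h^{-1})=q$ (since $zh$ is $G$-conjugate to $h$) while $o((zh)(h^{-1}))=o(z)=\exp(K)$, so $CP2$ forces $\exp(K)\le q$, and therefore $\exp(K)<q$ and $p<q$. In particular $q$ is odd, so the $q$-group $H$, being a Frobenius complement, has cyclic Sylow $q$-subgroup and hence is cyclic; and the action of $H$ on $K\setminus\{1\}$ being fixed-point-free gives $q^{\beta}\mid p^{\alpha}-1$. This is exactly case $(2)$.

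For the converse, case $(1)$ gives $G\in CP2$ by the easy half of the equivalence just used: if $x\neq y$ with $o(x)=p^{a}\ge p^{b}=o(y)$, then $x,y\in\Omega_{a}(G)=\{z\mid z^{p^{a}}=1\}$, a subgroup, so $o(xy)\mid p^{a}$. For case $(2)$, write $G=K\rtimes H$ with $H=\langle h\rangle$ cyclic and take $x\neq y$ in $G$: if $x,y\in K$ use that $K$ is itself a $CP2$ $p$-group; if exactly one of them, say $y$, lies outside $K$, then $K\rtimes\langle y\rangle$ is again Frobenius, so $xy$ (being outside $K$ there) has order dividing $o(y)$; and if both $x,y$ lie outside $K$, write $x=k_{1}h^{i}$ and $y=k_{2}h^{j}$ and argue according to whether $h^{i+j}=1$. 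The hard part, as the above already shows, is the subcase $h^{i+j}=1$: there $xy=k_{1}(h^{i}k_{2}h^{-i})$ ranges over all of $K$ while $o(x)=o(y)=o(h^{i})$, so one must use precisely the exponent bound $\exp(K)<q$ (equivalently $\exp(K)\le o(h^{i})$ for every nontrivial $h^{i}$); in every other configuration $o(xy)\le\max\{o(x),o(y)\}$ is immediate, and the remainder of the proof is either the elementary computation with $\Omega_{n}$ or an appeal to the classification quoted above.
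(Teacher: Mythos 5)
The paper itself gives no argument for this result --- it is imported verbatim as Theorem D of \cite{Deb} --- so your attempt can only be judged on its own terms. Your opening reduction is correct and is the right idea: if some $g$ had order $mn$ with coprime $m,n>1$ then $g^m\neq g^n$ and $o(g^mg^n)=o(g^{m+n})=mn>\max\{m,n\}$, so every element of a $CP2$-group has prime-power order and one may invoke the classification of such (EPPO) groups; the $p$-group case and the Frobenius case are then handled correctly (in particular the coset computation showing every element of $Kh$ is conjugate to $h$, whence $o(zh)=o(h^{-1})=q$ while $o(zh\cdot h^{-1})=\exp(K)$, really does force $\exp(K)<q$, $p<q$ and $H$ cyclic). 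The genuine gap in the forward direction is the one-line dismissal of the $2$-Frobenius groups: the claimed witness --- two distinct involutions whose product has odd prime order --- simply does not exist in a $2$-Frobenius EPPO group of odd order, and such groups occur (e.g.\ $C_3^{6}\rtimes(C_7\rtimes C_3)$ with $C_7$ acting fixed-point-freely on $C_3^{6}$). The conclusion is still true: in $G/F$, which is Frobenius with kernel $L/F$, the elements $t$ and $t^{-1}s$ (with $t$ outside and $s\neq 1$ inside the kernel) both have $p$-power order while their product $s$ has $q$-power order, and comparing exponents as in your Frobenius case yields a $CP2$ violation --- but that argument has to be supplied; the involution shortcut fails.

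The converse is in worse shape, though partly through no fault of yours. Item (2) as quoted carries no hypothesis on $\exp(F(G))$ or on the internal structure of the kernel, yet your proof of the subcase $h^{i+j}=1$ explicitly requires $\exp(K)<q$, and the subcase $x,y\in K$ requires $K$ itself to be a $CP2$ $p$-group; neither is available from (2) as stated, so this half of the argument is circular. Indeed the literal converse is false: $C_8^{4}\rtimes C_5$, with $C_5$ acting fixed-point-freely (lift a Singer element of $GL(4,2)$ to $GL(4,\mathbb{Z}/8)$), is a Frobenius group with kernel of order $2^{12}$, cyclic complement, and $p=2<5=q$, yet for $z$ of order $8$ in the kernel and $h$ of order $5$ one has $o(zh)=o(h^{-1})=5$ while $o\bigl((zh)h^{-1}\bigr)=8$. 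This reflects a defect of the abbreviated statement imported into the paper (T\u{a}rn\u{a}uceanu's Theorem D carries the missing conditions on the kernel and its exponent), but as a proof of the statement as posed, the backward implication for case (2) does not go through without restoring them.
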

\begin{thm}(Theorem 2.6 in \cite{mohsen})\label{12}   Let $G$  be a finite  group. 
Then $G$ is an $LCM$-group if and only if $G$ is a nilpotent group and each Sylow subgroup of $G$ is in $CP2$.

\end{thm}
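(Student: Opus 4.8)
The plan is to prove the two implications directly from the definition of $LCM(G)$, using the fact that a finite nilpotent group is the internal direct product of its Sylow subgroups.

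For the sufficiency direction I would assume $G$ is nilpotent with every Sylow subgroup in $CP2$ and write $G=P_1\times\cdots\times P_k$, where $P_i$ is the Sylow $p_i$-subgroup and the $p_i$ are distinct. Fixing $x\in G$, an integer $n$, and $y\in G$, I decompose $x=x_1\cdots x_k$ and $y=y_1\cdots y_k$ with $x_i,y_i\in P_i$. Since the factors commute and $|P_i|$ and $|P_j|$ are coprime for $i\neq j$, one gets $o(x^ny)=\prod_i o(x_i^ny_i)$, $o(x^n)=\prod_i o(x_i^n)$, $o(y)=\prod_i o(y_i)$, and $\mathrm{lcm}(o(x^n),o(y))=\prod_i\mathrm{lcm}(o(x_i^n),o(y_i))$. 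Hence it suffices to verify $o(x_i^ny_i)\mid\mathrm{lcm}(o(x_i^n),o(y_i))$ inside each $P_i$: if $x_i^n\neq y_i$, this is immediate from the $CP2$ inequality $o(x_i^ny_i)\le\max\{o(x_i^n),o(y_i)\}$ because all orders involved are powers of $p_i$; and if $x_i^n=y_i$, then $o(x_i^ny_i)=o(y_i^2)$ divides $o(y_i)$. This shows $x\in LCM(G)$ for every $x$, so $G$ is an $LCM$-group.

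For the necessity direction I would assume $G=LCM(G)$. First, to see that $G$ is nilpotent, fix a prime $p$ and let $S$ be the set of $p$-elements of $G$; for $a,b\in S$, the condition $a\in LCM(G)$ applied with $n=1$ and $y=b$ gives $o(ab)\mid\mathrm{lcm}(o(a),o(b))$, a power of $p$, so $ab\in S$. Thus $S$ is closed under products; since it is also closed under inversion and contains $1$, it is a subgroup, and as every element of $S$ is by definition a $p$-element, $S$ is a $p$-subgroup. Because $S$ contains every Sylow $p$-subgroup, it is the unique Sylow $p$-subgroup, hence normal. Since this holds for all $p$, $G$ is the direct product of its Sylow subgroups, i.e.\ nilpotent. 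Next, for a Sylow subgroup $P$ and distinct $u,v\in P$, applying the $LCM(G)$-condition to $u$ with $n=1$ and $y=v$ gives $o(uv)\mid\mathrm{lcm}(o(u),o(v))=\max\{o(u),o(v)\}$, so $o(uv)\le\max\{o(u),o(v)\}$ and $P\in CP2$.

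The proof is short, and the only delicate points are the order bookkeeping across the Sylow factors (commuting elements of pairwise coprime orders, so orders multiply and least common multiples factor through the primes) and the trivial special case $x_i^n=y_i$, where the $CP2$ hypothesis — stated only for distinct elements — must be handled by hand. I note that the nilpotency half of the necessity argument could instead be deduced from Theorem 1.1, since $G=LCM(G)$ forces $LC(G)=G$ and hence $G$ locally nilpotent, thus nilpotent as $G$ is finite; I prefer the self-contained argument above. Finally, combining the statement with Theorem \ref{a} yields the explicit description used later in the paper: the finite $LCM$-groups are exactly the nilpotent groups whose Sylow $p$-subgroups $P$ satisfy $\Omega_{n}(P)=\{x\in P: x^{p^{n}}=1\}$ for all $n$.
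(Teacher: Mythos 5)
Your proof is correct. Note, however, that the paper you were given does not prove this statement at all: it is quoted verbatim as Theorem 2.6 of the cited reference \cite{mohsen} and used as a black box, so there is no in-paper argument to compare yours against. Taken on its own merits, your argument is sound: in the sufficiency direction the order bookkeeping across the commuting, pairwise-coprime Sylow factors is exactly right, and you correctly observe that in a $p$-group the $CP2$ inequality $o(uv)\le\max\{o(u),o(v)\}$ upgrades to divisibility because all orders are powers of $p$ (and you handle the excluded case $u=v$ separately). In the necessity direction, closure of the set of $p$-elements under multiplication (via $o(ab)\mid\mathrm{lcm}(o(a),o(b))$ being a $p$-power) does force a unique normal Sylow $p$-subgroup for each $p$, hence nilpotency, and restricting the $LCM$ condition to a Sylow subgroup gives the $CP2$ inequality there. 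This is a complete and self-contained proof of the cited result.
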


From Lemma 2.4 of \cite{mohsen}, if $G$ is a periodic group, then $LCM(G)$ is an $\operatorname{Aut}(G)$-invariant subgroup of $G$.

Let $G$ be a periodic group and $p$ a prime number. We denote the set of all $p$-elements $x\in G$ such that
$o(hy)\mid lcm(o(h),o(y))$ for any $p$-element $y$ of $G$ and all $h\in \langle x\rangle$ by $LCM_p(G)$.
\begin{lem}\label{ces}
Let $G$ be a periodic group and $p$ a prime number. 

(i) For any $\sigma\in \operatorname{Aut}(G)$, we have $LCM_p(G)^{\sigma}=LCM_p(G)$.

(ii) The subgroup generated by $LCM_p(G)$ is a $p$-group.

\end{lem}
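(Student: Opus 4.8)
The plan is to read off both assertions directly from the definition of $LCM_p(G)$, using that an automorphism is an order-preserving bijection which respects products and carries cyclic subgroups to cyclic subgroups.

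For (i), I would take $\sigma\in Aut(G)$ and $x\in LCM_p(G)$. Since $\sigma$ preserves orders, $x^{\sigma}$ is again a $p$-element and $\langle x^{\sigma}\rangle=\langle x\rangle^{\sigma}$. Given any $p$-element $y'$ of $G$ and any $h'\in\langle x^{\sigma}\rangle$, write $y'=y^{\sigma}$ with $y=(y')^{\sigma^{-1}}$ a $p$-element, and $h'=h^{\sigma}$ with $h\in\langle x\rangle$. Then
\[
o(h'y')=o\big((hy)^{\sigma}\big)=o(hy)\mid lcm(o(h),o(y))=lcm(o(h'),o(y')),
\]
so $x^{\sigma}\in LCM_p(G)$. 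Hence $LCM_p(G)^{\sigma}\subseteq LCM_p(G)$ for every $\sigma$, and applying this to $\sigma^{-1}$ yields the reverse inclusion, so equality holds.

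For (ii), I would first record two elementary closure facts: every element of $LCM_p(G)$ is a $p$-element by definition, and since $\langle x^{-1}\rangle=\langle x\rangle$ the set $LCM_p(G)$ is closed under inverses, so every element of $H:=\langle LCM_p(G)\rangle$ is a finite product of elements of $LCM_p(G)$. The key observation is that if $x\in LCM_p(G)$ and $y$ is an arbitrary $p$-element of $G$, then taking $h=x$ in the defining condition gives $o(xy)\mid lcm(o(x),o(y))$, which is a power of $p$, so $xy$ is again a $p$-element. I would then argue by induction on $k$ that any product $w=x_1\cdots x_k$ with all $x_i\in LCM_p(G)$ is a $p$-element: the case $k=1$ is clear, and if $w':=x_1\cdots x_{k-1}$ is a $p$-element, then the conjugate $x_k^{-1}w'x_k$ has the same order as $w'$, hence is a $p$-element, so
\[
w=w'x_k=x_k\big(x_k^{-1}w'x_k\big)
\]
is the product of $x_k\in LCM_p(G)$ with a $p$-element and is therefore a $p$-element by the key observation. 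Consequently every element of $H$ has $p$-power order, i.e.\ $H$ is a $p$-group.

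The verifications in (i) are immediate once one uses that $\sigma$ preserves orders and $\langle x\rangle^{\sigma}=\langle x^{\sigma}\rangle$. The only genuinely delicate point is in (ii): the definition of $LCM_p(G)$ directly controls only products of the form $x_k\cdot(\text{$p$-element})$, whereas the induction forces the product $w'x_k$ with $x_k$ on the \emph{right}; this is exactly why the rewriting $w'x_k=x_k(x_k^{-1}w'x_k)$ is needed, together with the fact that conjugation preserves the order of $w'$.
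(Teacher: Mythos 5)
Your proof is correct. Part (i) is essentially the paper's argument (apply $\sigma^{-1}$ to the test element $y$ and use that automorphisms preserve orders), so there is nothing to add there. For part (ii) you take a genuinely different and, in my view, cleaner route. The paper fixes $x\in LCM_p(G)$, chooses a minimal generating subset $S$ of the conjugacy class $x^G$ for the normal closure $F=\langle x^G\rangle$, and runs an induction on word length over $S$ to prove the stronger statement that $o(y)\mid o(x)$ for every $y\in F$; it then assembles $\langle LCM_p(G)\rangle$ as a product of the normal $p$-subgroups $\langle x^G\rangle$. You instead induct directly on the number of factors in a product $x_1\cdots x_k$ of elements of $LCM_p(G)$, using the key observation that $x\cdot(\text{$p$-element})$ is a $p$-element when $x\in LCM_p(G)$, and the rewriting $w'x_k=x_k\bigl(x_k^{-1}w'x_k\bigr)$ to put the $LCM_p$-element on the left where the definition applies. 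This avoids the minimal-generating-set machinery entirely (which is the murkiest part of the paper's proof, containing an apparent typo in the line ``$S\subseteq LCM(H_p(G),A)$'') and needs no appeal to products of normal subgroups; what it gives up is the sharper conclusion that the normal closure $\langle x^G\rangle$ has exponent dividing $o(x)$, a fact the paper's version records in passing but does not actually need later (Proposition 2.4 only uses that $\langle h^G\rangle$ is a normal $p$-subgroup, which also follows from your version together with part (i) applied to inner automorphisms). Both arguments correctly use that $LCM_p(G)$ is closed under inverses because the defining condition depends only on $\langle x\rangle$.
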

\begin{proof}{    
(i) Let $x\in LCM_p(G)$ and let $\sigma\in \operatorname{Aut}(G)$.
Let $y$ be a $p$-element of $G$, and let  $h\in \langle x\rangle$.
Then there exists $p$-element $z\in G$ such that $\sigma(z)=y$.
Then $$o(\sigma(h)y)=o(\sigma(hz))=o(hz)\mid lcm(o(h),o(z))=lcm(o(\sigma(h)),o(y)).$$

(ii)  Let $x\in LCM_p(G)$. 
Let $S$ be a subset of $x^G$ such that  $F:=\langle x^G\rangle=\langle S\rangle$ but $F\neq \langle T\rangle$ for any proper subset $T$ of $S$.
Any $w\in F$ is just a finite sequence $w=s_{1}\ldots s_{r}$ whose entries $ s_{1},\ldots ,s_{r}$ are elements of $S\cup S^{-1}$. The integer $r$ is called the length of the element $w$ and its norm $|w|$ with respect to the generating set $S$ is defined to be the shortest length of $w$ over $S$.  
  Let $y\in F.$ We claim that $o(y)\mid o(x)$.
We proceed by induction on $|y|$.   The case $|y|=0$ is trivial.
Suppose that the result is true for all $a\in F$ with $|a|<|y|$. There are $s_1,\ldots,s_k\in S$, $\epsilon_i\in\{1,-1\}$ and positive integers $n_1,\ldots,n_k$ such that
 $y=(s_1)^{\epsilon_1n_1}\ldots(s_k)^{\epsilon_kn_k}$. 
 We may assume that $n_1>0$.
By the induction hypothesis, we have $o((s_1)^{\epsilon_1(n_1-1)}\ldots(s_k)^{\epsilon_kn_k})\mid o(x)$. 
From part (i) $x^g,(x^g)^{-1}\in LCM_p(G)$ for all $g\in G$, thus we have $S\subseteq LCM(H_p(G), A)$, and therefore $s_1\in LCM_p(G)$. Consequently,
\begin{eqnarray*}
o(y)&=&o(s_1(s_1)^{\epsilon_1 (n_1-1)}(s_k)^{\epsilon_2 n_2}\ldots(s_k)^{\epsilon_k n_k})\\&\mid& lcm(o(s_1), o((s_1)^{\epsilon_1 (n_1-1)}(s_2)^{\epsilon_2 n_2}\ldots(s_k)^{\epsilon_k n_k}))\mid\\&\vdots&\\&\mid&
lcm(o(s_1),o(x))\\&=&o(x).
\end{eqnarray*}
\noindent Let $x_1,x_2\in LCM_p(G)$.
Since $\langle x_1^G,x_2^G\rangle=\langle x_1^G\rangle\langle x_2^G\rangle$, we have $o(x_1x_2)\mid lcm(o(x_1),o(x_2))$. It follows that $\langle LCM_p(G)\rangle$ is a $p$-group. 
}\end{proof}
The following proposition shows that $LCM_p(G)$ is a subset of $LCM(G)$ whenever $G$ is a periodic group.
\begin{prop}\label{equ}
Let $G$ be a periodic group and let $g\in G$ be a $p$-element. Then $g\in LCM_p(G)$ if and only if $g\in LCM(G)$.
\end{prop}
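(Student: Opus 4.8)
The plan is to prove the two implications separately, with the forward direction ($g\in LCM_p(G) \Rightarrow g\in LCM(G)$) being the substantive one. First I would unwind the definitions: $g\in LCM(G)$ means $o(g^n y)\mid \mathrm{lcm}(o(g^n),o(y))$ for \emph{all} $y\in G$ and all integers $n$, whereas $g\in LCM_p(G)$ only gives this for $p$-elements $y$ (and $h\in\langle g\rangle$, which is the same family as the $g^n$). So the job is to upgrade control over $p$-elements to control over arbitrary elements. The natural device is to decompose an arbitrary $y\in G$ via its primary components: since $G$ is periodic, $\langle y\rangle$ is a finite cyclic group, so $y = \prod_{\ell} y_\ell$ where $y_\ell$ is the $\ell$-part of $y$, the $y_\ell$ commute with each other, and $o(y)=\prod_\ell o(y_\ell)$.

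The key steps, in order: (1) Fix $h=g^n\in\langle g\rangle$; note $h$ is a $p$-element. (2) Write $y=y_p\,y_{p'}$ where $y_p$ is the $p$-part and $y_{p'}$ the $p'$-part (the product over all primes $\ell\neq p$), with $y_p$ and $y_{p'}$ commuting. (3) Handle the $p'$-part: since $h$ is a $p$-element and $y_{p'}$ a $p'$-element, I expect to argue that $o(hy_{p'}) = o(h)\cdot o(y_{p'})$ or at least $o(hy_{p'})\mid o(h)o(y_{p'}) = \mathrm{lcm}(o(h),o(y_{p'}))$ — here one may need the hypothesis again or a direct order argument, since $\langle h\rangle$ and $\langle y_{p'}\rangle$ need not commute; the relevant observation is that in the cyclic (hence abelian) group $\langle hy_{p'}\rangle$... actually more carefully, one uses that the image of $hy_{p'}$ in any quotient, or one invokes that $h\in LCM(G)$ will follow once we know it works against $p$-elements, so the $p'$-reduction must be done intrinsically. (4) Handle the $p$-part using the hypothesis $g\in LCM_p(G)$ directly: $o(h\,y_p)\mid \mathrm{lcm}(o(h),o(y_p))$. (5) Combine: reduce the general $y$ to these two cases, using $\mathrm{lcm}(o(g^n),o(y)) = \mathrm{lcm}(o(g^n),o(y_p))\cdot o(y_{p'})$ and assembling the order of $g^n y$ from its primary parts — the $p$-part of $g^ny$ is controlled by step (4) and its $p'$-part by step (3). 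The converse direction is immediate: $LCM(G)\cap\{p\text{-elements}\}$ trivially satisfies the weaker condition, since quantifying over $p$-elements is quantifying over a subset of $G$, and $h\in\langle g\rangle$ is among the powers $g^n$.

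The main obstacle I anticipate is step (3), the $p'$-part: the subgroups $\langle h\rangle$ and $\langle y_{p'}\rangle$ do not commute, so one cannot simply say $o(hy_{p'})=o(h)o(y_{p'})$ from abelian-group facts. The cleanest route is probably to work inside the cyclic group $\langle hy_{p'}\rangle$ and compare orders prime-by-prime, or — more robustly — to observe that it suffices to verify the $LCM(G)$ condition after noting that the $p'$-part of the product $g^n y$ coincides with the $p'$-part of $y$ up to conjugacy/order, perhaps by passing to the quotient by $O_p$ or by using that $g$ is a $p$-element so it acts trivially on orders of $p'$-elements in the cyclic subgroup they generate. Alternatively, and perhaps most efficiently, one splits $y=y_py_{p'}$, writes $g^ny = g^n y_p y_{p'}$, sets $w=g^ny_p$ (whose order divides $\mathrm{lcm}(o(g^n),o(y_p))$, a $p$-power times itself — a $p$-number — by step (4)), and then must bound $o(wy_{p'})$; since $o(w)$ is a $p$-number and $o(y_{p'})$ is a $p'$-number, $o(wy_{p'})\mid o(w)o(y_{p'})$ will hold precisely because $\langle w\rangle\cap\langle y_{p'}\rangle=1$ forces the coset $wy_{p'}$ to have order $\mathrm{lcm}$ in the relevant sense — but again commutativity is the gap, so here I would invoke the hypothesis once more applied to the element $w$ only if $w$ happens to lie in $LCM_p(G)$, which it need not; hence the genuine content is a lemma that \emph{any} $p$-element times \emph{any} $p'$-element has order dividing the product, which is \emph{false} in general (e.g. $S_3$), so step (3) must actually use that $g^n$, being in $LCM_p(G)$, constrains $y_p$-interactions and the $p'$-part is forced to split off cleanly — I would resolve this by first showing $\langle g\rangle$ centralizes enough, or by reducing to the Sylow structure via Lemma 2.4 of \cite{mohsen}. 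This is where the proof requires care, and I expect the authors' argument to exploit a structural fact (normality of $\langle LCM_p(G)\rangle$ as a $p$-group, from Lemma \ref{ces}) that makes the $p'$-reduction automatic.
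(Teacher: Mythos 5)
Your reduction of the easy direction is fine, and you have correctly located both the difficulty (bounding the order of a $p$-element times a $p'$-element when the two need not commute) and the structural input that must be used (Lemma \ref{ces}: $LCM_p(G)$ is conjugation-invariant and generates a normal $p$-subgroup). But the proposal stops exactly where the proof has to happen: your step (3) is a list of candidate strategies, one of which you yourself refute (the claim $o(hy_{p'})\mid o(h)o(y_{p'})$ fails already in $S_3$), and none of which is carried out. As written there is no argument that upgrades control over $p$-elements to control over arbitrary $y$, so the forward implication is not proved.

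The idea you are missing is to raise the product to the $p'$-part of the order, rather than trying to split the product itself into primary parts. Given $h\in\langle g\rangle$ and $z\in G$, write $o(z)=p^k a$ with $p\nmid a$, and $z=yv=vy$ with $o(y)=p^k$, $o(v)=a$. Then
$$(zh)^a=h^{z}h^{z^2}\cdots h^{z^a}z^a=h^{z}h^{z^2}\cdots h^{z^a}y^a,$$
since $z^a=y^av^a=y^a$. Every factor $h^{z^i}$ is a conjugate of $h$ and hence lies in $LCM_p(G)$ by part (i) of Lemma \ref{ces}, and the whole right-hand side lives in the $p$-group $\langle h^G\rangle\langle y\rangle$ (a normal $p$-subgroup times a $p$-element), so each partial tail $h^{z^i}\cdots h^{z^a}y^a$ is itself a $p$-element. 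One can therefore apply the $LCM_p$ hypothesis repeatedly, peeling off one conjugate of $h$ at a time, to obtain $o((zh)^a)\mid lcm(o(h),o(y))$, and hence $o(zh)\mid a\cdot lcm(o(h),o(y))=lcm(o(h),o(z))$. This single computation replaces your steps (3)--(5) and never requires bounding the order of a product of a $p$-element with a $p'$-element.
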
 
 \begin{proof}
 If $g\in LCM(G)$, then by definition of $LCM(G)$, we have
  $o(hy)\mid lcm(o(h),o(y))$ for any $p$-element $y$ of $G$ and all $h\in \langle g\rangle$, thus $g\in LCM_p(G)$. 
  Suppose $g\in LCM_p(G)$,
then for all $p$-elements $y\in G$, we have $o(hy)\mid lcm(o(h),o(y))$ for any    $h\in \langle g\rangle$.
 Let $z\in G$ and let $h\in \langle g\rangle$, then $o(z)=p^ka$ where $p\nmid a$.
 There exist $y,v\in G$ such that $z=yv=vy$ where $o(y)=p^k$ and
 $o(v)=a$.
Then
$(zh)^{a}=h^zh^{z^2}\ldots h^{z^a}z^a=h^zh^{z^2}\ldots h^{z^a}y^a$.
From Lemma \ref{ces}, $ \langle h^G\rangle$ is a normal $p$-subgroup of $G$, thus $\langle h^G\rangle\langle y\rangle$ is a $p$-group. Hence 
$h^{z^i}\ldots h^{z^a}z^a$ is a $p$-element for $i=1,\ldots,a$.
Consequently,  
\begin{eqnarray*}
o((zh)^a)=o(h^zh^{z^2}\ldots h^{z^a}y^a)&\mid& lcm(o(h^z),o(h^{z^2}\ldots h^{z^a}y^a))\\&\mid&
lcm(o(h^z),o(h^{z^2}),o(h^{z^3}h^{z^4}\ldots h^{z^a}y^a))\\&\mid&
 \vdots
\\&\mid& lcm(o(h^z),\cdots, o(h^{z^a}),o(y))\\&=&
lcm(o(h),o(y)).
\end{eqnarray*}
Then $$o(zh)\mid a \cdot lcm(o(h),o(y))= lcm(o(h),a\cdot o(y))=lcm(o(h),o(z)),$$ and thus $g\in LCM(G)$. 
 \end{proof}
 
\begin{cor}\label{42}
Let $G$ be a finite group and let $P\in \operatorname{Syl}_p(\operatorname{Fit}(G))$ and $Q\in \operatorname{Syl}_p(G)$.
If $Q\in CP2$, then $P\subseteq LCM(G)$.
\end{cor}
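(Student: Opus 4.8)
The plan is to reduce the divisibility condition defining $LCM(G)$ to its $p$-local analogue via Proposition~\ref{equ}, and then read it off directly from the $CP2$ hypothesis inside a single Sylow $p$-subgroup.

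First I would check that $P$ is a normal $p$-subgroup of $G$. Since $Fit(G)$ is nilpotent, its Sylow $p$-subgroup $P$ is the unique one and hence characteristic in $Fit(G)$; as $Fit(G)\trianglelefteq G$, this gives $P\trianglelefteq G$. A normal $p$-subgroup is contained in $O_p(G)$, the intersection of all Sylow $p$-subgroups of $G$, so $P$ lies in every member of $Syl_p(G)$.

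By Proposition~\ref{equ}, every $p$-element of $LCM_p(G)$ belongs to $LCM(G)$, so it suffices to prove $P\subseteq LCM_p(G)$. Fix $x\in P$, an element $h\in\langle x\rangle$, and a $p$-element $y\in G$; I must verify $o(hy)\mid lcm(o(h),o(y))$. Choose $R\in Syl_p(G)$ with $\langle y\rangle\leq R$. Since $P\trianglelefteq G$ we have $h\in P\leq R$, and since $R$ is conjugate to $Q$ and membership in $CP2$ depends only on element orders, $R\in CP2$. Now $h,y\in R$, so the defining inequality of $CP2$ gives $o(hy)\leq\max\{o(h),o(y)\}$ (the case $h=y$ being trivial, as $o(h^2)\leq o(h)$). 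All the orders involved are powers of $p$, so $\max\{o(h),o(y)\}=lcm(o(h),o(y))$, and an inequality between two powers of $p$ is a divisibility; hence $o(hy)\mid lcm(o(h),o(y))$. Thus $x\in LCM_p(G)$, and Proposition~\ref{equ} yields $x\in LCM(G)$, so $P\subseteq LCM(G)$.

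I do not anticipate a real obstacle: the only delicate points are establishing that $P$ is normal in $G$ (hence contained in every Sylow $p$-subgroup) and noting that the $CP2$ inequality turns into the required divisibility precisely because all the computations take place inside a $p$-group.
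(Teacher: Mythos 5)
Your proof is correct and follows essentially the same route as the paper's: both place $x$ (or $x^n$) and the $p$-element $y$ inside a common Sylow $p$-subgroup conjugate to $Q$, use the $CP2$ inequality there (which becomes divisibility since all orders are $p$-powers), and conclude via Proposition~\ref{equ}. You merely spell out two steps the paper leaves implicit, namely that $P$ is normal in $G$ and hence lies in every Sylow $p$-subgroup, and the conversion of the $CP2$ inequality into the required divisibility.
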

If $G$ is a $p$-group, we can find an upper bound on the $LC$-class   of $G$ in terms of its nilpotency classes. To do this we need the following simple lemma.
 \begin{lem}\label{zp}
 Let $G$ be a finite $p$-group.
Then $Z_{p-1}(G)\leq LCM(G)$.
 \end{lem}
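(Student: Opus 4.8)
The goal is to show that in a finite $p$-group $G$, the $(p-1)$-st term of the upper central series lies inside $LCM(G)$. In view of Proposition~\ref{equ}, it suffices to check that every $x \in Z_{p-1}(G)$ satisfies $o(hy) \mid \mathrm{lcm}(o(h),o(y))$ for all $p$-elements $y \in G$ and all $h \in \langle x\rangle$; since $\langle x\rangle \subseteq Z_{p-1}(G)$ as well, it is enough to verify the condition for $h = x$ itself. So the plan is: fix $x \in Z_{p-1}(G)$ and $y \in G$, set $N = \langle x^G \rangle$ — a normal $p$-subgroup contained in $Z_{p-1}(G)$ — and estimate $o(xy)$ in terms of $o(x)$ and $o(y)$, working inside the $p$-group $N\langle y\rangle$.

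The key step is a commutator-collection argument. Since $x \in Z_{p-1}(G)$, all $(p-1)$-fold (and higher) commutators $[x, y, y, \ldots, y]$ vanish, so the conjugates $x, x^y, x^{y^2}, \ldots$ all lie in the abelian-by-(short central series) subgroup $N$, and more importantly the map $y$-conjugation acts on $N$ \emph{unipotently of degree $\le p-1$}. I would compute $(xy)^{o(y)}$: writing $m = o(y)$, one gets $(xy)^m = x \cdot x^{y^{-1}} \cdot x^{y^{-2}} \cdots x^{y^{-(m-1)}} \cdot y^m = \bigl(\prod_{i=0}^{m-1} x^{y^{-i}}\bigr) \cdot y^m$, and since $y^m = 1$ this is a product of $m$ conjugates of $x$ lying in $N$. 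The crucial claim is that this product has order dividing $o(x)$ — equivalently, that $\prod_{i=0}^{m-1} x^{y^{-i}} \in \mho_?(N)$ is controlled. This is exactly where the number $p-1$ enters: conjugation by $y$ on $N$ is $1 + \nu$ where $\nu$ is "nilpotent of class $\le p-1$" in the appropriate sense on the successive central quotients, and the key identity is that $1 + t + t^2 + \cdots + t^{m-1} = (t-1)^{m-1}\cdot(\text{unit}) + (\text{terms killed because } m \text{ is a } p\text{-power multiple})$ — more precisely, since $m$ is a power of $p$ and $(t-1)^p$ already annihilates, one shows $\sum_{i=0}^{m-1} t^i$ applied to $x$ lands where its order is still at most $o(x)$. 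I expect to invoke the standard fact (Hall--Petrescu, or the "regular $p$-group" collection formula, or an explicit induction on the central series length of $N$) that a product of $p$-power-many $y$-translates of an element of a nilpotency-class-$< p$ normal $p$-subgroup has order dividing the order of that element.

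From the bound $o((xy)^m) \mid o(x)$ with $m = o(y)$ a power of $p$, one concludes $o(xy) \mid o(x)\cdot o(y)$; a symmetric argument (or a direct coprime-part extraction as in Proposition~\ref{equ}) upgrades this to $o(xy) \mid \mathrm{lcm}(o(x),o(y))$ when $y$ itself is a $p$-element, which is all we need. Replacing $x$ by an arbitrary $h \in \langle x\rangle$ changes nothing since $\langle h^G\rangle \le \langle x^G\rangle$ still has class $< p$. Hence $x \in LCM_p(G)$, and by Proposition~\ref{equ}, $x \in LCM(G)$; as $x \in Z_{p-1}(G)$ was arbitrary, $Z_{p-1}(G) \subseteq LCM(G)$.

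The main obstacle will be making the "collection of $p^a$ conjugates" estimate precise: one must track how conjugation by $y$ moves an element through the central series $N \ge [N,G] \ge [N,G,G] \ge \cdots$ of length $\le p-1$, and argue that summing $p^a$ such translates cannot raise the order, using that each application of $(\text{conj} - 1)$ drops a level and that $p$ applications already give the identity on $N$ — essentially a "$p$-group version of $\binom{p^a}{k} \equiv 0 \pmod{p^a}$ for small $k$" bookkeeping. Getting the inductive hypothesis on the class of $N$ to interact correctly with the exponent of $N$ (rather than just its class) is the delicate point; I would set it up as an induction on $|G|$ or on the class of $N$, peeling off $Z(G) \cap N$ at each stage.
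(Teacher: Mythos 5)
Your instinct that regularity of $p$-groups of class $<p$ is the relevant tool is exactly right, but your proposal does not actually carry the argument through, and the part you leave open is the whole content of the lemma. You explicitly defer the crucial estimate (``the main obstacle will be making the collection of $p^a$ conjugates estimate precise'', ``the delicate point'') to an unspecified Hall--Petrescu/collection computation. Worse, even if that step were granted in the form you state it, it would not suffice: from $(xy)^{m}=\bigl(\prod_{i}x^{y^{-i}}\bigr)y^{m}$ with $m=o(y)$ and ``the product of conjugates has order dividing $o(x)$'' you only get $o(xy)\mid o(x)\,o(y)$. In a $p$-group $o(x)\,o(y)$ is strictly larger than $\mathrm{lcm}(o(x),o(y))=\max(o(x),o(y))$ whenever both orders exceed $1$, and the ``symmetric argument / coprime-part extraction'' you invoke to upgrade one to the other does nothing here because everything in sight is a power of the same prime. (Also, since $G$ is itself a finite $p$-group, the reduction via Proposition~\ref{equ} to $p$-elements $y$ is vacuous.)

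The paper's proof avoids all of this bookkeeping. For $x\in Z_{p-1}(G)$ and $y\in G$, set $H=\langle x,y\rangle$. Since $x\in Z_{p-1}(G)\cap H\le Z_{p-1}(H)$, the quotient $H/Z_{p-1}(H)$ is generated by the image of $y$ alone, hence cyclic, which forces $H$ to have nilpotency class at most $p-1$. By P.~Hall's theorem, a finite $p$-group of class less than $p$ is regular, and in a regular $p$-group the elements of order dividing $p^{n}$ form a subgroup; therefore $o(zy)\mid\max(o(z),o(y))=\mathrm{lcm}(o(z),o(y))$ for every $z\in\langle x\rangle$, which is precisely the condition for $x\in LCM(G)$. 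If you want to salvage your route, you should prove the sharper claim that $(xy)^{p^{n}}=1$ whenever $x^{p^{n}}=y^{p^{n}}=1$ inside a class-$<p$ group --- but that is exactly the regularity statement, so you may as well quote it directly as the paper does.
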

\begin{proof}
Let $x\in Z_{p-1}(G)$ and $y\in G$.
Let $H=\langle x,y\rangle$, and let $z\in \langle x\rangle$.
Since the nilpotency class of $H$ is less than $p$, $H$ is a regular group. Therefore $o(zy)\mid lcm(o(z),o(y))$.
Since $y$ is an arbitrary element of $G$, we deduce that $x\in LCM(G)$.
\end{proof} 
\begin{proof}
Let $x\in P$ and $y$ be a $p$-element  of order $p^m$. There exists $g\in G$ such that $y\in Q^g$. Since $P\leq Q^g$, we have $x^n,y\in Q^g$ for all integers $n$.
It follows from $Q^g\in CP2$ that
$o(x^ny)\mid lcm(o(x^n),o(y))$.
From Proposition \ref{equ}, $x\in LCM(G)$.
Then $P\subseteq LCM(G)$.
\end{proof}
  In the  theorem  we  establish a connection between the nilpotency class of a finite $p$-group and the $LC$-nilpotency class, for $p$ a prime number. 
\begin{thm}
 Let $G$ be a finite $p$-group of nilpotency class $t$.
Then $G$ is an $LC$-nilpotent group of $LC$-class at most $\lfloor t/(p-1)\rfloor+1$.
 \end{thm}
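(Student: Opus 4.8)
The plan is to argue by induction on the nilpotency class $t$ of $G$, with Lemma~\ref{zp} supplying the crucial step. I would begin by recording one general observation: for \emph{any} finite $p$-group $H$ the subgroup $LC(H)=\langle LCM(H)\rangle$ is contained in $H$, so every factor $LC_i(G)/LC_{i-1}(G)=LC(G/LC_{i-1}(G))$ of the canonical $LC$-series of $G$ is itself a $p$-group, hence nilpotent. Consequently ``$G$ is $LC$-nilpotent of class at most $k$'' is equivalent to ``$LC_k(G)=G$'', and it suffices to bound how quickly the canonical $LC$-series reaches $G$.

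For the base case, if $t\le p-1$ then $G=Z_t(G)\le Z_{p-1}(G)$, and Lemma~\ref{zp} gives $Z_{p-1}(G)\subseteq LCM(G)\subseteq LC(G)=LC_1(G)$, so $LC_1(G)=G$ and $G$ is $LC$-nilpotent of class $1\le\lfloor t/(p-1)\rfloor+1$. For the inductive step suppose $t\ge p$ and set $N=LC_1(G)$ and $\bar G=G/N$. By Lemma~\ref{zp} again $Z_{p-1}(G)\le N$, so $\bar G$ is a quotient of $G/Z_{p-1}(G)$ and hence has nilpotency class $\bar t\le t-(p-1)<t$. If $\bar G=1$ we are done as in the base case; otherwise $\bar G$ is a finite $p$-group of class $\bar t$, so by the induction hypothesis $LC_m(\bar G)=\bar G$ for some $m\le\lfloor \bar t/(p-1)\rfloor+1\le\lfloor (t-(p-1))/(p-1)\rfloor+1=\lfloor t/(p-1)\rfloor$.

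It then remains to transfer this back to $G$. For that I would use the identity $LC_i(G/N)=LC_{i+1}(G)/N$ for all $i\ge 1$, proved by a routine induction on $i$ directly from the recursive definition of the $LC$-series: for $i=1$ it is the defining equation $LC(G/LC_1(G))=LC_2(G)/LC_1(G)$, and the inductive step rewrites $LC_i(\bar G)/LC_{i-1}(\bar G)=LC\big(\bar G/LC_{i-1}(\bar G)\big)$ using the isomorphism $\bar G/LC_{i-1}(\bar G)\cong G/LC_i(G)$. Applying this with $i=m$ gives $LC_{m+1}(G)/N=LC_m(\bar G)=\bar G=G/N$, hence $LC_{m+1}(G)=G$, so $G$ is $LC$-nilpotent of class at most $m+1\le\lfloor t/(p-1)\rfloor+1$, which closes the induction.

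The only genuinely substantive ingredient here is Lemma~\ref{zp} itself (already established in the excerpt, via the regularity of $p$-groups of class $<p$); everything else — the floor arithmetic and the shift identity for the canonical $LC$-series — is mechanical. The one point that needs a little care is the degenerate case $\bar G=1$, i.e.\ $LC_1(G)=G$, which must be split off explicitly so that the induction hypothesis is only ever invoked on a group of strictly smaller nilpotency class.
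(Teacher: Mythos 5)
Your proof is correct and follows essentially the same route as the paper: induction on $t$, with Lemma~\ref{zp} giving $Z_{p-1}(G)\le LC(G)$ so that $G/LC(G)$ has class at most $t-(p-1)$, and then the floor arithmetic. You are in fact slightly more careful than the paper, spelling out the shift identity $LC_i(G/LC_1(G))=LC_{i+1}(G)/LC_1(G)$, the degenerate case $LC_1(G)=G$, and the fact that every $LC$-factor of a $p$-group is automatically nilpotent — details the paper leaves implicit.
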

\begin{proof}
We proceed by induction on $t$.
If $t< p$, then
by Lemma \ref{zp}, $G=LCM(G)$, and so $G$ is an $LC$-nilpotent group of $LC$-class $\lfloor t/(p-1)\rfloor+1=1$. Suppose that $t\geq p$.
From Lemma \ref{zp}, $Z_{p-1}(G)\leq LC(G)$.
Then the nilpotency class of  $\frac{G}{LC(G)}$  is at most 
$t-p+1$. By the induction hypothesis, $\frac{G}{LC(G)}$ is an $LC$-nilpotent group of $LC$-class at most  $\lfloor (t-p+1)/(p-1)\rfloor+1$.
Therefore $G$ is a $LC$-nilpotent group of $LC$-class at most $\lfloor t/(p-1)\rfloor+1$.
\end{proof}
Let $G$ be a finite group, we say $G$ is a minimal $NLCM$-group whenever $G$ is not a $LCM$-group, but
all proper sections  of $G$ are $LCM$-group.  
From the following theorem, one we can  see that the $NLCM$-group has restricted structure.
We denote by $c(p)$ and $p^{n(p)}$, 
respectively, the maximum class and order of a $2$-generator finite group of 
exponent $p$.

In accordance with A. Mann \cite{man22}, a $p$-group $G$ is called a $P_i$-group (for $i = 1, 2, 3$) if $G$, as well as all sections of $G$, satisfy the $i$-th statement among the following:
\begin{enumerate}
    \item Each element of $\Omega_n(G)$ is a $p$-th power.
    \item Each element of $\mho_n(G)$ has order $p^n$ (at most).
    \item $|\mho_n(G)| = |G : \Omega_n(G)|$.
\end{enumerate}

Moreover, a $p$-group $G$ is called a $P$-group if $G$ and all its sections satisfy all three statements.

The following auxiliary result is used to prove Theorem \ref{min}.

\begin{lem}\label{Mann}(Theorem 6 \cite{man22})
Let $G$ be a minimal non-$P_2$-group. Then:
\begin{enumerate}
    \item[(a)] $G$ can be generated by two elements of order $p$; $\exp G = p^2$.
    \item[(b)] $G$ contains a maximal subgroup of exponent $p$. In particular, $\exp \Phi(G) = p$; moreover, $\Phi(G) = G' = Z_{p-1}(G)$.
    \item[(c)] $Z(G) = \mho_1(G)$ has order $p$.
    \item[(d)] $G$ is a minimal non-$P$ group and a $P_1$-group.
    \item[(e)] Each proper section of $G$ has smaller class than $G$.
    \item[(f)] $cl G \leqslant c(p) + 1$, $|G| \leqslant p^{n(p)+1}$.
\end{enumerate}
\end{lem}

\begin{thm}\label{min}
Let $G$ be a finite $NLCM$-group.
\begin{itemize}
\item[(a)] If $G$ is not a $p$-group, then 
$G$ is a minimal non-nilpotent group.
\item[(b)] If $G$ is a $p$-group, then 
\begin{enumerate}
\item $G$ can be generated by two elements of order p; $exp(G)=p^2$. 
    \item $G$ has a maximal subgroup $H$ such that $exp(H)=p$ and $G=H\rtimes \langle u\rangle$ for some $u\in G$ of order $p$, moreover, $\varPhi(G)=G'=Z_{c-1}(G)$. 
    \item $Z(G)=\mho_1(G)$ has order $p$.

    \item Each proper section of $G$ has smaller nilpotency  class than  $G$. 

    \item $cl(G)<c(p)+1$, $|G|<p^{n(p)+1}.$

\end{enumerate}
\end{itemize}
\end{thm}
\begin{proof}
(a)  From Corollary 2.15  of \cite{mohsen}, $G$ is a minimal non-nilpotent group.

(b) It follows by Lemma \ref{Mann}.
\end{proof}
\noindent We need the following Lemma to prove our first main result.

\begin{lem}\label{norm1}
Let $G$ be a finite group and let
$H=\langle a\rangle$ be a normal cyclic $p$-subgroup of $G$. Then $H\subseteq LCM(G)$.
\end{lem}
\begin{proof}
  Let $h\in H$.  Let  $y\in G$, be a $p$-element, and let $$s=\max\{o(h),o(y)\} =lcm(o(h),o(y)).$$

  Let $\Gamma_i(\langle h,y\rangle)$ be   the $i$-th member of the lower central series of  $\langle h,y\rangle$.
  Then  by the Hall–Petrescu formula we have $$(hy)^{s}=y^{s}h^sc_2^{(_2^s)}c_3^{(_3^s)}\ldots c_s^{(_s^s)}$$
     where $c_i\in \Gamma_i(\langle h,y\rangle)$ for $i=2,\ldots,s$.
Since $c_i\in \mho_1(H)$, for all $i=2,\ldots,s$ it follows
$c_i^s=1$ for all $i=2,\ldots,s-1$.
Since $\langle a\rangle \langle y\rangle$ is a metacyclic $p$-group,  $c_s=1$.
It follows that $o(hy)\mid s$, thus  $o(hy)\mid lcm(o(h),o(y))$. Therefore  $H\subseteq LCM(G)$.
\end{proof}
 
   \begin{cor}\label{dih}
      If $G\in\{ D_{2^n},SD_{2^n},Q_{2^m}\}$ where $n\geq 3$ and $m\geq 4$, then   $LC(G)$ is the  cyclic subgroup of index $2$ of $G$. 
  \end{cor}
  
Let $G$ be a finite group and $N$ a normal subgroup of $G$. In general $\frac{LC(G)N}{N}$ is not a subgroup of $LC\left(\frac{G}{N}\right)$, but in some cases the inclusion holds. The following proposition demonstrates that the product of the $LC$-subgroups of two finite groups form a subgroup of the $LC$-subgroup of their direct product.  

\begin{prop}\label{pro}
Let $G$ and $H$ be two periodic groups. Then,
\[
LCM(G) \times LCM(H) \subseteq LCM(G \times H).
\]
Moreover, if $\gcd(\exp(G), \exp(H)) = 1$, then
\[
LCM(G) \times LCM(H) = LCM(G \times H).
\]
\end{prop}

\begin{proof}
Let $g \in \langle g_1 \rangle \subseteq LCM(G)$ and $h \in \langle h_1 \rangle \subseteq LCM(H)$.  
For any $x \in G$ and $y \in H$, since $H \cap G = 1$, we have
\[
o(xy) = lcm(o(x), o(y)) \quad \text{and} \quad o(gh) = lcm(o(g), o(h)).
\]
Furthermore,
\begin{align*}
o(ghxy) &\mid lcm(o(gx), o(hy)) \\
&\mid lcm(o(g), o(x), o(h), o(y)) \\
&= lcm(o(gh), o(xy)).
\end{align*}
Thus, $g_1 h_1 \in LCM(G \times H)$.

Now, suppose that $\gcd(\exp(G), \exp(H)) = 1$. To complete the proof, we need to show that
\[
LCM(G \times H) \subseteq LCM(G) \times LCM(H).
\]
Let $xt \in LCM(G \times H)$ where $x \in G$ and $t \in H$, and let $n$ be an integer. Then,
\[
(xt)^n = x^n t^n \in LCM(G \times H).
\]
Setting $x^n = g$ and $t^n = h$, we consider any $y \in G$. Then,
\[
o(ghy) \mid lcm(o(gh), o(y)).
\]
Since $gh = hg$, we obtain
\[
o(gh) \mid lcm(o(g), o(h)) = o(g) o(h).
\]
Furthermore,  
\[
(gh)^{o(g)} = g^{o(g)} h^{o(g)} = h^{o(g)}.
\]
Since $\gcd(o(g), o(h)) = 1$, it follows that $o(h^{o(g)}) = o(h)$, so $o(h) \mid o(gh)$.  
By a similar argument, $o(g) \mid o(gh)$. Therefore,  
\[
o(g) o(h) = lcm(o(g), o(h)) \mid o(gh).
\]

Since $\gcd(\exp(G), \exp(H)) = 1$, we conclude that $o(gh) = o(g) o(h)$. Also,  
\[
o(ghy) = o(gyh) = o(gy) o(h).
\]
Thus,
\begin{align*}
o(gy) o(h) &= o(ghy) \\
&\mid lcm(o(gh), o(y)) \\
&= lcm(o(g), o(y), o(h)) \\
&= lcm(o(g), o(y)) o(h).
\end{align*}
Consequently, $o(gy) \mid lcm(o(g), o(y))$, which implies that $x \in LCM(G)$.  
Similarly, by the same argument, $t \in LCM(H)$, so $xt \in LCM(G) \times LCM(H)$.  

Hence,  
\[
LCM(G \times H) \subseteq LCM(G) \times LCM(H).
\]
\end{proof}

The next example illustrates that $LC(G) \times LC(H)$ can be a proper subgroup of $LC(G \times H)$.
\begin{exam} Let $T=\langle x,y\rangle\times \langle a,b\rangle$ where 
$\langle x,y\rangle\cong\langle a,b\rangle\cong D_8$ and $o(x)=o(a)=4$ and
$o(y)=o(b)=2$.  We have $LC(\langle x,y\rangle)=\langle x\rangle$ and $LC(\langle a,b\rangle)=\langle a\rangle$. Therefore $LC(\langle x,y\rangle)\times LC(\langle a,b\rangle)=\langle x\rangle \times \langle a\rangle$. But
$xya\in LCM(T)\setminus \langle x\rangle \times \langle a\rangle$. 

\end{exam}
Let $G$ be a periodic group and $N$ a normal subgroup of $G$. The following example shows that $\frac{LC(G)N}{N}$ is not necessarily a subgroup of $LC(\frac{G}{N})$.
\begin{exam} Let $G=\langle x,y\rangle\times \langle a,b\rangle$ where 
$\langle x,y\rangle\cong\langle a,b\rangle\cong D_8$ and $o(x)=o(a)=4$ and
$o(y)=o(b)=2$.  Clearly, 
$xya\in LCM(G)$. Let $N=\langle a^2\rangle$.
Then $o(xyaN)=2$.
Since $$o(xya(y^{-1})N)=o(xN)=4\nmid lcm(o(xyaN),o(y^{-1}N))=2,$$ we have
$xyaN\not\in LC(\frac{G}{N}$). Hence $\frac{LC(G)N}{N}$ is not a subgroup of
$LC(\frac{G}{N})$.
\end{exam}

 Let $G$ be a finite group and let $p$ be a prime divisor of $|G|$.   For any subset $X$ of $G$, define  
\[
\Omega_{(p,1)}(X) = \{x \in X : x^p = 1\},
\]  
and let $\Omega_{p,1}(X)$ denote the subgroup of $G$ generated by $\Omega_{(p,1)}(X)$.
 
\begin{lem}\label{ccc}
Let $G$ be a $p$-group. Then $ \Omega_{p,1}(LCM(G))=\Omega_{(p,1)}(LCM(G))$.
\end{lem}
\begin{proof}
Let $z\in \Omega_{(p,1)}(LCM(G))\setminus \{1\}$.
Let $x_1,\ldots,x_r\in \Omega_{(p,1)}(LCM(G))$ such that
$z=x_1\ldots x_r$ where $r$ is minimal. We have  $o(x_i)=p$ for all $i=1,\ldots ,r$.
Since
\begin{align*}
o(z)&=o(x_1\ldots x_r)\\&\mid lcm(o(x_1),o(x_2\ldots x_r))\\&\vdots \\&\mid  lcm(o(x_1),\ldots ,o(x_r))=p.
\end{align*}
  Hence $exp(\Omega_{p,1}(LCM(G)))=p$, and so
$\Omega_{p,1}(LCM(G))=\Omega_{(p,1)}(LCM(G))$.
  
\end{proof}
\begin{rem}
    Let $G=C_3\wr C_3$ be the wreath product of $C_3$ by $C_3$.
    Then $G$ is a minimal irregular group and thus $|\mho_1(G)|=3$.
    Then $$\langle \{x\in LCM(G): o(x)=9\}\rangle=G,$$ so $LC(G)=G$.
    It follows that $\Omega_{3,1}(LC(G))\neq \Omega_{(3,1)}(LCM(G))$. 
\end{rem}

 If $G$ is an $LCM$-group, then $\exp(\Omega_{p,1}(G)) = p$ for any prime divisor $p$ of $|G|$, and $\frac{G}{\Omega_{p,1}(G)}$ is a $LCM$-group. The following lemma shows that the converse is also true.
  \begin{lem}\label{a2}
 Let $G$ be a finite. Then $\frac{G}{\Omega_{p,1}(G)}$ is an $LCM$-group and 
  $exp(\Omega_{p,1}(G))=p$ if and only if  $G$ is an $LCM$-group.
 \end{lem}
\begin{proof} 
Let $p$ be a prime divisor of $|G|$, and 
let $N:=\Omega_{p,1}(G)$.

($\Rightarrow$) If $G$ is an $LCM$-group, then 
$exp(N)=p$.
Also, for any $x\in G\setminus\{1\}$, we have $\langle x\rangle\cap N\neq 1$.
Then $o(xN)=o(x)/p$.
It follows that $exp(\Omega_n(\frac{G}{N}))=p^n$ for all divisor $p^n$ of $exp(\frac{G}{N}).$
Consequently, $\frac{G}{N}$ is an $LCM$-group.

($\Leftarrow$)
Since $G$ is an $LCM$-group, then $G$ is a nilpotent group.
Hence, we may assume that $G$ is a $p$-group.

Let $i\geq 1$ be an integer. Since  $\frac{G}{N}\in CP2$, from Theorem \ref{a},  $\Omega_{i}(\frac{G}{N})=\Omega_{(i)}(\frac{G}{N}).$ For the reason that $$N=\Omega_1(G)=\{z\in G: z^p=1\},$$ we have  $$\frac{\Omega_{i+1}(G)N}{N}\subseteq\Omega_{i}(\frac{G}{N})=\{xN: x\in  \Omega_{(i+1)}(G)\}.$$
Let $x\in \Omega_{i+1}(G)$.
Then $xN\in \Omega_{i}(\frac{G}{N})$, so  $xN=gN$ where $g\in \Omega_{(i+1)}(G)$.
Therefore $$x^{p^i}N=(xN)^{p^i}=(gN)^{p^i}=N.$$
It follows that $x^{p^{i+1}}=1$, since $exp(N)=p$, thus $x\in \Omega_{i+1}(G).$
Consequently,   $\Omega_{i+1}(G)=\Omega_{(i+1)}(G)$, and therefore Theorem \ref{a} provides that $G\in CP2$, and $G$ is a $LCM$-group.
\end{proof}
\begin{lem}\label{lcsub}
Let $G$ be a finite    group.
\begin{enumerate}
\item[(i)] Let $x,y\in LCM(G)$ of order prime number $p$. Then $xy\in LCM(G)$.
\item[(ii)] Let $\frac{H}{F}$ be a section of $G$. Suppose that
\[
\frac{NF}{F}\leq \frac{(LC(G)\cap H)F}{F}
\]
is a minimal normal subgroup of $\frac{H}{F}$, where $N\leq LC(G)\cap H$.
Then
\[
\frac{NF}{F}\subseteq LCM\left(\frac{H}{F}\right).
\]

\item[(iii)] If $G$ is an $LC$-nilpotent group, then every section  $\frac{H}{F}$  of $G$ is also an
$LC$-nilpotent group.

\end{enumerate}
\end{lem}

\begin{proof}

\smallskip
\noindent\textbf{(i)}
Since \[o(xy)\mid lcm(o(x),o(y))=p,\]
we have $o(xy)\mid p$.
Let $z\in G\setminus \{1\}$ be an $p$-element.
Then 
\[o((xy)z)=o(x(yz)\mid lcm(o(x),o(yz))\mid lcm(o(x),o(y),o(z))=o(z)=lcm(o(xy),o(z)).\]

Let $n\ge 1$ be an integer.
Then by induction we can prove that
\begin{align*}
    o((xy)^nz)&=o(x(y(xy)^{n-1}z))\\&\mid  lcm(o(x),o(y(xy)^{n-1}z))\\&\mid lcm(o(x),o(y),o((xy)^{n-1}z))\\&\mid o(z)=lcm(o((xy)^n),o(z)).
\end{align*}
 \[\]

Then $xy\in LCM(G).$

\smallskip
\noindent\textbf{(ii)}
We argue by induction on $\left|\frac{H}{F}\right|$.
Since $LC(G)$ is a nilpotent   group, $\frac{NF}{F}$  is abelian, so
\(
\left|\frac{NF}{F}\right|=p^m
\)
for some prime $p$. Suppose, by contradiction, that there exist
$u\in N$ and a $p$-element $v\in H$ such that
\[
o(uvF)\nmid lcm(o(uF),o(vF)).
\]

Set
\(
\frac{K}{F}=\frac{N\langle v\rangle F}{F}.
\)
If $K=H$, then $\frac{H}{F}$ is a $p$-group. Since
\(
\frac{NF}{F}\leq \frac{H}{F}
\)
is a minimal normal subgroup of \(\frac{H}{F}\), we must have 
\(
\frac{NF}{F}\leq Z\left(\frac{H}{F}\right).
\)
Consequently, $\frac{H}{F}$ is abelian, and hence
\[
\frac{H}{F}=LCM\left(\frac{H}{F}\right),
\]
which contradicts the choice of $u$ and $v$.
Therefore,
\(
\frac{K}{F}\neq \frac{H}{F}.
\)
Let $A(N)$ denote the set of all minimal normal subgroups
\(
\frac{UF}{F}
\)
of $\frac{K}{F}$ such that $U\leq N$. For every
\(
\frac{UF}{F}\in A(N),
\)
we have
\(
U\leq LC(G)\cap H .
\)
Hence, by the induction hypothesis,
\[
\frac{UF}{F}\subseteq  LCM\left(\frac{K}{F}\right).
\]

Moreover, since 
\[
\frac{NF}{F}
=
\left\langle
\left\{
\frac{UF}{F}:\frac{UF}{F}\in A(N)
\right\}
\right\rangle .
\]
 
By Case (i),
\[
\frac{NF}{F}\subseteq  LCM\left(\frac{K}{F}\right).
\]

It follows that
\[
o(uvF)\mid lcm(o(uF),o(vF)),
\]
which is a contradiction. Hence,
\[
\frac{NF}{F}\subseteq LCM\left(\frac{H}{F}\right).
\]

\smallskip
\noindent\textbf{(iii)}
We prove the result by  induction on \(|G|\).

The base case $|G|=1$ is trivial. So suppose that $|G|>1$. Let  $S:=\frac{H}{F}$ be a section of $G$.
Let $i\geq 1$ be such that
\[
\frac{H_i}{F}:=LC_i(S)=LC_{i+1}(S).
\]
Let $R=\frac{(LC(G)\cap H)H_i}{H_i}$.
If $R\neq 1$, then there exists a minimal normal subgroup $\frac{NH_i}{H_i}$  such that $N\leq (LC(G)\cap H)$.
By part (ii) \[\frac{NH_i}{H_i}\subseteq LCM\left(\frac{H}{H_i}\right).\]
Then \[LC\left(\frac{H}{H_i}\right)\cong LC\left(\frac{H/F}{H_i/F}\right)=\frac{LC_{i+1}(S)}{LC_i(S)}\neq 1,\]
  which is a contradiction. Thus $R=1$, and so 
\[
\frac{(LC(G)\cap H)F}{F}\leq \frac{H_i}{F}=LC_i(S).
\]

Since $G$ is $LC$-nilpotent, the quotient
\(
\frac{G}{LC(G)}
\)
is again an $LC$-nilpotent group. Since \(|\frac{G}{LC(G)}|<|G|\), by the induction hypothesis,
\[
\frac{LC(G)H}{LC(G)}
\cong
\frac{H}{H\cap LC(G)}
\]
is an $LC$-nilpotent group.
Because
\[
\left|\frac{H}{H\cap LC(G)}\right|<|G|,
\]
another application of the induction hypothesis gives
\[
\frac{
H/(H\cap LC(G))
}{
H_i/(H\cap LC(G))
}
\cong
\frac{H}{H_i}\cong \frac{H/F}{H_i/F}=\frac{S}{LC_i(S)}
\]
is an $LC$-nilpotent group.

Therefore, by the definition of the $LC$-series,
$S$ is an $LC$-nilpotent group.

\end{proof}

\begin{lem}\label{semi2}
     Let $G$ be a finite group and let $G=P\rtimes K$ where $gcd(|P|,|K|)=1$.
Then $G$ is an $LC$-nilpotent group if and only if $P$ and $K$ are $LC$-nilpotent groups. 
 \end{lem}
 \begin{proof}

 ($\Leftarrow$) We proceed by induction on $|G|$.
 If $|G|=|P|$, then $G=P$ is an $LC$-nilpotent group. So suppose $|G|>|P|$.  Let $p$ be a prime divisor of $|LC(P)|$.
     By Proposition \ref{equ}, $LC_p(P)\leq LC(G)$. Then $LC(G)\neq 1$. Since $gcd(|P|,|K|)=1$ we have 
     \[\frac{G}{LC(G)}\cong \frac{P}{LC(G)\cap P}\rtimes\frac{K}{LC(G)\cap K}.\]

     By Lemma \ref{lcsub}  (iii), $\frac{P}{LC(G)\cap P}$ and $\frac{K}{LC(G)\cap K}$ are $LC$-nilpotent groups. By the induction hypothesis, $\frac{G}{LC(G)}$ is an $LC$-nilpotent group, thus $G$ is an $LC$-nilpotent group. 

      ($\Rightarrow$)
      Then $G$ is an $LC$-nilpotent group. By Lemma \ref{lcsub},  $P$ and $K$ are $LC$-nilpotent  groups.

 \end{proof}
 \begin{lem}\label{semii}
    Let $G$ be a finite group and let $x \in LCM(G)$. Suppose $N$ is a normal subgroup of $G$ such that $\gcd(|N|, o(x)) = 1$. Then,  
    \[
    xN \in LCM\left(\frac{G}{N}\right).
    \]
\end{lem}
 \begin{proof}
     By Proposition \ref{equ}, we may assume that $x$ is a $p$-element for some prime divisor of $|G|$.
     Since $p\nmid |N|$, we have $\langle y\rangle \cap N=1$, so $o(yN)=o(y)$ for any $p$-element $y$ of $G$. It follows that 
     \begin{align*}
       o(xyN)=o(xy)&\mid lcm(o(x),o(y))\\&=lcm(o(xN),o(yN)).
     \end{align*} 
     Consequently, $xN\in LCM_p(\frac{G}{N})\subseteq LCM(\frac{G}{N})$
 \end{proof}
 
\begin{rem}
    
 Let $G:=SmallGroup(243,4)$. Then $G$ is a $LCM$-group. Also, there exists a minimal normal subgroup $N$ of $G$ such that $\frac{G}{N}=\Omega_{3,1}(\frac{G}{N})$ and $exp(\frac{G}{N})=9$. Hence  there exists $x\in \Omega_{3,1}(LCM(G))$ such that $xN\not\in LCM(\frac{G}{N}).$
  \end{rem}

 
 \section{$LC$-nilpotent groups}
 In this section, we establish the sufficient and necessary conditions for a finite group $G$ to be an $LC$-nilpotent group. We start with the following definition.
   
 \begin{defn}
    Let $G$ be a finite group. We call $G$ a $LC$-simple group whenever $LC(G)=1$.
\end{defn}
\noindent The following three Lemmas help us to prove Theorem \ref{lc5}.
\begin{lem}\label{abel}
    Let $G=ND$ where $N$  is an elementary abelian $p$-group and $D$ is an elementary abelian $q$-group.
    Suppose for any characteristic subgroup of $E$ of $N$ and any subgroup $F$ of $D$, $EF$
    is not a Frobenius group.
 Then $q\mid |\operatorname{Fit}(G)|$.

\end{lem}
\begin{proof}
    We proceed by induction on $|G|$.
    First suppose $q^2\mid |D|$.
Let $M$ be a maximal normal subgroup of $G$ such that $N\leq M$.
If $q\mid |M|$, then by the induction  $q\mid |\operatorname{Fit}(M)|$, so $q\mid |\operatorname{Fit}(G)|$.
So $q^2\nmid |D|$, and so $N$ is a maximal subgroup of $G$ and $D$ is a cyclic group.   By the Fitting theorem
    $N=C_N(D)\times [N,D]$. 
    Since $[N,D]=G'$ is a characteristic subgroup of $G$, by our assumption  $G'D$ is not a Frobenius group.
    By the induction hypothesis, $q\mid |\operatorname{Fit}(G'D)|$, so
$D\leq \operatorname{Fit}(G'D)$, thus  $C_{G'}(D)=G'\neq 1$, which leads to a contradiction.
    
\end{proof}

Let $\mathfrak{F}$ be the set of all $2$-Frobenius groups $G$ of type $(p, q, p)$. The Lemma \ref{2fr} establishes that for any $G \in \mathfrak{F}$, we have $LC(G) = 1$. Hereafter, when we write $G = ABC \in \mathfrak{F}$, it signifies that $A$ and $AB$ are normal subgroups of $G$, and $AB$ and $BC$ are Frobenius groups with kernels $A$ and $B$, and complements $B$ and $C$, respectively.

The following auxiliary Lemma is used to prove Lemma \ref{2fr}.

\begin{lem}\label{Rulin}(Lemma 1.7 \cite{13}) Let $G = ABC$ be a $2$-Frobenius group. Suppose that $AC$ is a $p$-group. Then $\exp(AC) \geq p^2$.
    
\end{lem}

\begin{lem}\label{2fr}
    Let $G=ABC\in \mathfrak{F}$. Then $LC(G)=1$.
\end{lem}
\begin{proof}
    Suppose for a contradiction that $LC(G)\neq 1$.
    Then $LC(G)\leq A=\operatorname{Fit}(G)$.  Let $P\in \operatorname{Syl}_p(G)$.
    We have $P=A\rtimes C$.
    Let $N=\Omega_{3,1}(LCM(G))$, and let $z\in C$ of order $p$.
Then $H=NB\langle z\rangle$ is a $2$-Frobenius group. As $N\subseteq LCM(H))$, we deduce that    $exp(N\langle z\rangle)=p$.
By Lemma \ref{Rulin}, $exp(N\langle z\rangle)\geq p^2$, which is a contradiction.
\end{proof}
 The following Lemma is a  special case in  Theorem 
\ref{lc5}. 
 
 \begin{lem}\label{3211}
    Let $G$ be a finite solvable group such that $\mathfrak{F}$ does not contain any section of $G$, and let $N$ be a minimal normal subgroup of $G$. Furthermore, suppose that for any solvable group $H$ for which $\mathfrak{F}$ does not contain any section of $H$ and $|H| < |G|$, the group $H$ is a $LC$-nilpotent group. Then $N \subseteq LCM(G)$.
\end{lem}
\begin{proof}
Since $G$ is a solvable group, $|N|=p^m$ for some prime number $p$ and integer $m\geq 1$.
 We proceed by induction on $|G|$.

For $G = N$, the result is trivial. Assume $G\neq N$. 
Let $x\in N\setminus\{1\}$, and  
let $y\in G\setminus N$ be a $p$-element in $G$ such that 
$o(xy)\nmid lcm(o(x),o(y))$.

First suppose that $G$ has another minimal normal subgroup $D$. 
By our assumption $\mathfrak{F}$ does not contain any section of $\frac{G}{D}$, then by the induction hypothesis,  
     $\frac{ND}{D}\subseteq LCM(\frac{G}{D})$. We have $o(x)=p$

    If $\langle xy\rangle \cap N=1$, we have 
    \begin{align*}
   o(xy)&=o(xyN)\\&=o(yN)\\&\mid o(y)\\&=lcm(o(x),o(y)),
    \end{align*}
  which is a contradiction.
    Thus $\langle xy\rangle \cap N\neq 1$, $\langle xy\rangle \cap D=1$ and then
    $o(xyD)=o(xy)$. It follows from $\frac{ND}{D}\subseteq LCM(\frac{G}{D})$ that 
    \begin{align*}
   o(xy)&=o(xyD)\\&\mid lcm(o(xD),o(yD))\\&\mid lcm(o(x),o(y)),
    \end{align*}   which leads to a contradiction.
  
So  $N$ is the unique normal minimal subgroup of $G$.
 Then $G=N\rtimes M$ where $M$ is a maximal subgroup of $G$ and $\operatorname{Fit}(G)$ is a $p$-group.  If $\varPhi(\operatorname{Fit}(G))\neq 1$, then $$N\leq \varPhi(\operatorname{Fit}(G))\leq \varPhi(G)\leq M,$$ which is a contradiction, as $M\cap N=1$.
 Thus $\varPhi(\operatorname{Fit}(G))=1$, and so $\operatorname{Fit}(G)$ is an elementary abelian $p$-subgroup of $G$.
 Since $N$ is the unique minimal normal subgroup of $G$, we conclude that $N=\operatorname{Fit}(G)$.

  Since  $\mathfrak{F}$   does not contain any  section of  $M$, by our assumption, $M$ is an $LC$-nilpotent group.
  So $LCM(M)\neq 1$.
Let $D\subseteq LCM(M)$ be a normal minimal subgroup of $M$.  

First suppose that $H_c:=ND\langle c\rangle\neq G$ for any   $p$-element $c$ of $G$.
Let $U\leq N$ be a minimal normal subgroup of $ND$.
Let $g\in G$. We claim that $U^g$ is a normal minimal  subgroup of $ND$.
Let $h\in ND$. Since $ND\lhd G$, there exists $h'\in ND$ such that $gh=h'g$.
It follows that
\[U^{gh}=U^{h'g}=U^{g},\]
 so $U^g\lhd ND$.
Let $R\leq U^g$ be a normal minimal subgroup of $ND$.
Then  $R^{g^{-1}}\lhd ND$.
Since $U$ is a normal  minimal  subgroup of $ND$, we must have $R^{g^{-1}}=U$, consequently, $R=U^g$, is a normal minimal subgroup of $ND$, as claimed.

Let $$\Delta=\{U^g:g\in G\}=\{U^{g_1},\ldots,U^{g_k}\}.$$
Let $S_j=U^{g_1}U^{g_2}\ldots U^{g_j}$ for $j=1,...,k$.
Let $r\geq 1$  be the smallest integer such that 
$S_r^{g}=S_r:=S$ for any $g\in \{g_1,...,g_k\}$.
Since $S^{g}=S$ for all $g\in \{g_1,...,g_k\}$, we conclude that $S\lhd G$.

Let $1\le i\le r$.
If there exists  $h\in U^{g_i}\cap \Pi_{j\neq i}U^{g_j}$ such that $h\neq 1$, then 
\[U^{g_i}=\langle h^{ND}\rangle\subseteq U^{g_i}\cap \Pi_{j\neq i,j\le r}U^{g_j},\]
which is a contradiction by minimality of $r$.
Hence, 
\(U^{g_i}\cap \Pi_{j\neq i}U^{g_j}=1\) for all $1\le i\le r$, and so
$$S=U^{g_1}\times  \ldots \times U^{g_r}$$ is a product of minimal normal subgroups of $ND$.
Since $S\lhd G$, $S\le N$ and $N$ is a minimal normal subgroup of $G$, we must have $N=S$.

Any $U^{g_i}$ is contained in some minimal normal subgroup $F_i$ of $H_c$, as $U^{g_i}$ is a minimal normal subgroup of $ND$ and $ND\leq H_c$.
By the induction hypothesis, 
$$U^{g_i}\leq F_i\leq \Omega_{(p,1)}(LCM(H_c)),$$ for all $i$, and so $N\leq \Omega_{(p,1)}(LCM(H_c))$.
Hence,  for any $x\in N$, we have 
$o(x^nc)\mid lcm(o(x^n),o(c))$
for any integer $n$ and any $p$-element $c$. 
In particular, 
$o(xy)\mid lcm(o(x),o(y))$, 
which is a contradiction.

Therefore $H_c=G$ for some $p$-element $c\in G$. We may assume that $c=y$. Let  $H:=H_y=G$.
 If $G$ is a $p$-group, then
$N\leq Z(G)$, so $o(xy)\mid lcm(o(x),o(y))$, 
which is a contradiction.  
So assume $G$ is not a $p$-group. Next   $\pi(|G|)=\{p,q\}$ where $\pi(|D|)=\{q\}\neq \{p\}$, as $G=H$.  
 Since  $H\not\in \mathfrak{F}$,   $H$ has an element $w$ of order $pq$.
 Since $y$ is an $p$-element and $\frac{G}{ND}=\langle yND\rangle$,  all elements of order $pq$ are in $ND\langle w\rangle$, we    consider the following two cases:

 {\bf Case 1.}
 First suppose 
$M$ has an element of order $pq$. We may assume that $w\in M$. Let  $R:=D\langle w\rangle$, and let   $v=w^p$.
Since $p\mid |C_R(v)|$ and $D\leq C_R(v)$, we conclude that $C_R(v)=R$. 

As  $\frac{M}{D}$ is cyclic, we conclude that $R\lhd M$.
Since $Z(R)$ is a characteristic subgroup of $R$, we have $Z(R)\lhd M$.
Since $Z(R)\lhd M$, and $D$ is a minimal normal subgroup of $M$, we have $D=\langle v^M\rangle\le Z(R)$.
Therefore $R$ is an abelian group.
Then $ \langle w^q\rangle\lhd M$.
It follows that 
$N\langle w^q\rangle\lhd G$, and so 
$\langle w^q\rangle\leq Fit(G)$, which is a contradiction.

 {\bf Case 2.} 
 Suppose $M$ does not have any element of order $pq$, and let $b\in M$ be a $p$-element such that $o(bDN)=o(yDN)$.
 Hence
 $w\in ND$.
Let $E\leq N$ be a characteristic subgroup of $ND$. Since $ND\lhd G$, we conclude that $E\lhd G$. 
Then $ED\langle y\rangle$ is a subgroup of $G$.
Since $\mathfrak{F}$ does not contain any  section of $G$, and $D\langle b\rangle$ is a Frobenius group, $ED$ is not a Frobenius group.
By Lemma \ref{abel}, $q\mid |\operatorname{Fit}(ND)|$, therefore $q\mid |C_{ND}(N)|$.
Since $N=\operatorname{Fit}(G)$ and $G$ is a solvable group, we have $C_G(N)=N$, which is our final contradiction. 
Thus $N  \subseteq LCM(G)$. 

 \end{proof}
 \begin{thm}\label{lc5}
    Let $G$ be a finite solvable group. Then $G$ is an $LC$-nilpotent group if and only if $\mathfrak{F}$ does not contain any section of $G$.
 \end{thm}
 \begin{proof}
 Let $G$ be a minimal counter example for this theorem, i.e, $G$ is a minimal counterexample in the following way: either $G$ is an $LC$-nilpotent group  and  
 $\mathfrak{F}$   contains a  section of $G$, or
 $\mathfrak{F}$ does not contain any section of  $G$  and $G$ is not an $LC$-nilpotent group. 
 Then for any group $H$ with $|H|<|G|$  we have 
 $H$ is an $LC$-nilpotent group if and only if 
 $\mathfrak{F}$ does not contain any section of $H$.

First suppose that $G$ is an $LC$-nilpotent group. We show that $\mathfrak{F}$ does not contain any section of $G$.
    Let $\frac{K}{N}$ be a section of $G$ such that $\frac{K}{N}$ is a $2$-Frobenius group of type $(p, q, p)$. If $K\neq G$ or $N\neq 1$ by the induction hypothesis, $\frac{K}{N}$ is not a $2$-Frobenius group, which is a contradiction.   By Lemma \ref{lcsub}, $\frac{K}{N}$ is an $LC$-nilpotent group.
    Therefore  we may assume that $K=G$ and $N=1$. 
    By Lemma \ref{2fr},
    $LC(G)=1$, which leads to a contradiction.

    Now, suppose any section of $G$ does not contain any $2$-Frobenius subgroup of type $(p,q,p)$. We show that $G$ is an $LC$-nilpotent group. 
     Let $N$ be a minimal normal subgroup of $G$ of order $p^m$, where $p$ is a prime. 
By Lemma \ref{3211}, $N \subseteq LCM(G)$.

Hence, $LC(G)\neq 1$.
By minimality of $G$, 
$\frac{G}{LC(G)}$ is an $LC$-nilpotent group and so $G$ is an $LC$-nilpotent group.
\end{proof}
\begin{rem}
     Note that Theorem \ref{lc5} is not valid if we drop the condition of solvability, as $G=A_5$ does not contain any $2$-Frobenius subgroup of type $(p,q,p)$, while $LC(A_5)=1$.
 \end{rem}
 
 From Theorem \ref{lc5} it immediately follows that the algebraic system of all LC-nilpotent group is a variety.
 \begin{cor}\label{ques}
 Let $\mathfrak{L}$ be the set of all $LC$-nilpotent finite groups.
     Then  subgroups and  quotients of elements of $\mathfrak{L}$ are in $\mathfrak{L}$. Also, for  $G,H\in \mathfrak{L}$, we have $G\times H\in \mathfrak{L}$.
 \end{cor}

Let $G$ be a finite group such that for any proper subgroup $H$ of $G$, $LC(H)\neq1$. Is it true that $G$ is a solvable group? The answer is no.
For example, let $A_5$ be the alternating group of degree five. Then  
    $LC(A_5)=1$, but 
    $LC(H)\neq 1$ for any proper subgroup $H$ of $A_5$.  
In the following theorem we find the structure of  finite solvable groups $G$, such that $LC(G)=1$, but $LC(H)\neq 1$ for any proper subgroup $H$ of $G$.
 
\begin{thm}
    Let $G$ be a finite solvable group such that $LC(G)=1$. Then for any proper subgroup $H$ of $G$, $LC(H)\neq 1$ if and only if $G=ABC$ is a $2$-Frobenius group of type $(p,q,p)$ where $A$ is a  minimal  normal subgroup, $|B|=q$ and $|C|=p$.
\end{thm}
\begin{proof}
($\Leftarrow$) It follows from Lemma \ref{2fr}. 

 \noindent ($\Rightarrow$)   From Theorem \ref{lc5}, $G$ has a $2$-Frobenius subgroup $K=ABC$ of type $(p,q,p)$.  From Lemma \ref{2fr} it follows that 
    $LC(K)=1$. Then $K=G$.
Let $N$ be a  minimal normal subgroup of $G$.
Then $NBC$ is a $2$-Frobenius subgroup of $G$  of type $(p,q,p)$. Since $LC(NBC)=1$, we have $NBC=G$.
Let $D\leq B$ of order $q$.
Then $NDC$ is a $2$-Frobenius subgroup of $G$ of type $(p,q,p)$. Since $LC(NDC)=1$, we have $NDC=G$. By the same argument $|C|=p$.
\end{proof}

Next we prove that any
finite supersolvable group is an $LC$-nilpotent group.
 \begin{lem}\label{super}
   Let   $G$ be a supersolvable group. Then $G$ does not contain any $2$-Frobenius subgroups of type $(p,q,p)$. 
\end{lem}
\begin{proof}
    We proceed by induction on $|G|$.
    Let $N$ be a normal subgroup of $G$ of order $p$ where $p$ is a prime number.
    By the induction hypothesis, $\frac{G}{N}$ does not contain any $2$-Frobenius subgroup.
    Suppose that $H$ is a $2$-Frobenius subgroup of $G$, then
     $\operatorname{Fit}(H)$ is a non-cyclic  subgroup of $H$.
    Also, $H=BC$ where $B=\operatorname{Fit}(H)\rtimes L$ is a Frobenius group and both $C$ and $\frac{B}{\operatorname{Fit}(H)}$ are cyclic groups.
    Clearly, $N\leq H$.
   Since $|\operatorname{Aut}(N)|=p-1$, we have $[H:C_H(N)]\mid p-1$.
   It follows that $q:=|L|\mid p-1$.
   Then $q\leq p-1$.
   Since $\frac{H}{\operatorname{Fit}(H)}$ is a Frobenius group, we have $p\mid |C|\mid q-1$, which is a contradiction.    
\end{proof}
\begin{cor}\label{5}
Let $G$ be a supersolvable  group. Then
$G$ is a finite $LC$-nilpotent group.
\end{cor}
 \begin{proof}
 Since subgroups and quotient groups of supersolvable groups are supersolvable, the result 
 follows from Lemma \ref{super} and Theorem \ref{lc5}.
 \end{proof}

 Let $A_4$ be the alternating group of degree four. Then $A_4$ is not a supersolvable group, but
$LC_1(A_4)$ is a group of order $4$ and $LC_2(A_4)=A_4$.
Hence $A_4$ is a $LC$-nilpotent group, which is not a supersolvable group. 
Therefore the set of all $LC$-nilpotent groups is larger than the set of all finite supersolvable groups. 
  \begin{lem}\label{abb}
      Let $G$ be a finite group such that 
    for any $x,y\in G\setminus\{1\}$, we have $LC(\langle x,y\rangle)\neq 1$.
    Let $N$ be a normal minimal abelian subgroup of $G$ of order $p^m$.
    Then $N\leq \Omega_{p,1}(LCM(G))$.
  \end{lem}
  \begin{proof}
    We proceed by induction on $|G|$.
First, suppose that for any $p$-element $y \in G$, the subgroup $H_y := N\langle y \rangle \neq G$. We have  $N = U_1 \times \ldots \times U_k$, where $U_1, \ldots, U_k$ are minimal normal subgroups of $H_y$. By the induction hypothesis, 
    \[
    N = U_1 \times \ldots \times U_k \leq \Omega_{p,1}(LCM(H_y)),
    \]
    thus $o(ay) \mid lcm(o(a), o(y))$ for any $a \in N$ for any $p$-element $y\in G$. Hence, $N \leq \Omega_{p,1}(LCM_p(G))$, and therefore $N \leq \Omega_{p,1}(LCM(G))$.

    Now, suppose $H_y = G$ for some $p$-element $y$ of $G$. Then, $N \leq Z(G) \leq LCM(G)$.
    This completes the proof.
\end{proof}
\noindent Next we are ready to prove the main result of our paper.
\begin{thm}
    Let $G$ be a finite  group. Then   the following conditions are equivalent:

\begin{enumerate}
    \item   $G$ is an $LC$-nilpotent group.
    
    \item   No section  of $G$ contains any $2$-Frobenius subgroup of type $(p, q, p)$.

  \item   For any $x,y\in G\setminus\{1\}$, we have $LC(\langle x,y\rangle)\neq 1$. \end{enumerate}

\label{mainthe1}
\end{thm}
\begin{proof}
    The equivalence $(1)\Leftrightarrow(2)$ follows directly from Theorem \ref{lc5}.

\noindent $(1)\Rightarrow(3):$      
    Suppose for any $x, y \in G$ the subgroup $\langle x, y \rangle$ is an $LC$-nilpotent group, then it follows that $LC(\langle x, y \rangle) \neq 1$, for any $x, y \in G$.
    
\noindent $(1)\Leftarrow(3):$
Let $G$ be the minimal counterexample.
Then for any group finite group $H$ of order less than $G$, if $H$ satisfies  one the conditions of the theorem it satisfies all other conditions. 

Suppose first that \(G\) is not a solvable group. By the induction hypothesis,
every proper subgroup of \(G\) is \(LC\)-nilpotent. Hence \(G\) is a minimal
non-solvable group and therefore \(G\) is a minimal simple group.

It is well known that every minimal simple group is generated by two elements.
Let \(x,y\in G\) be such that \(G=\langle x,y\rangle\). Since \(G\) is not
\(LC\)-nilpotent, we have
\[
1\neq LC(\langle x,y\rangle)=LC(G)\leq \operatorname{Fit}(G)=1,
\]
which is a contradiction. Therefore, \(G\) must be solvable.

Let \(N\) be a minimal normal subgroup of \(G\). By Lemma~\ref{abb}, we have
\(
N\leq LC(G).
\)

Since $G$ is not $LC$-nilpotent group  \(G\neq N\).
We consider the following two cases.

\noindent
\textbf{Case 1.}
Assume that there exist \(x,y \in G \setminus N\) such that
\[
LC\!\left(\frac{H}{N}\right)=1,
\quad \text{where } \frac{H}{N}=\langle xN,yN\rangle .
\]
Then \(\frac{H}{N}\) is not an \(LC\)-nilpotent group.

If \(H \neq G\), then by the induction hypothesis, \(H\) is \(LC\)-nilpotent. Hence, by Theorem~\ref{lc5}, \(H\) has no section belonging to \(\mathfrak L\), and therefore \(\frac{H}{N}\) also has no section in \(\mathfrak L\). Again by Theorem~\ref{lc5}, this implies that \(\frac{H}{N}\) is \(LC\)-nilpotent, a contradiction. Thus \(H=G\), and so \(LC(\frac{G}{N})=1\).

First suppose that \(\langle x,y\rangle \neq G\).
Set $R=N \cap \langle x,y\rangle.$
Since $N\lhd G$, we have $R\lhd \langle x,y\rangle$.
Since $R\le N$ and $N$ is abelian, $N\le N_G(R)$. So $R\lhd G$.
Since $N$ a minimal normal subgroup of \(G\) and $N\nleq \langle x,y\rangle$, we have
\[N \cap \langle x,y\rangle=1.\]

By the minimality argument, \(\langle x,y\rangle\) is \(LC\)-nilpotent. Consequently,
\[
\frac{G}{N} \cong \langle x,y\rangle,
\]
is \(LC\)-nilpotent, a contradiction.

Therefore, we must have \(\langle x,y\rangle = G\).

Let \(D\) be another minimal normal subgroup of \(G\).
By Lemma \ref{abb},
$D\subseteq LCM(G)$.
If \(\gcd(|D|,|N|)=1\), then by Lemma~\ref{semii},
\[
\frac{DN}{N} \leq LC\!\left(\frac{G}{N}\right)=1,
\]
which is a contradiction. Hence \(DN\) is a \(p\)-group.

Let \(u \in D \setminus \{1\}\). Since \(LC(\frac{G}{N})=1\), there exists   \(p\)-element \(v\) of \(G\) such that
\[
o(uvN) \nmid lcm(o(uN),o(vN)).
\]

First suppose that \(ND\langle v\rangle \neq G\). Let \(H\) be a maximal subgroup of \(G\) such that
\[
ND\langle v\rangle \leq H.
\]
By the minimality of \(G\), the subgroup \(H\) is \(LC\)-nilpotent, and hence so is \(\frac{H}{N}\).
Since \(|\frac{H}{N}|<|G|\), the conditions (1) and (3) of the theorem are equivalent for \(\frac{H}{N}\). It follows that   \(LC(\langle wN,gN\rangle)\neq 1\) for all nontrivial \(wN,gN \in \frac{H}{N}\).

Let \(\frac{U}{N} \leq \frac{DN}{N}\) be a minimal normal subgroup of \(\frac{H}{N}\). As in the proof of Lemma~\ref{3211}, each conjugate \(\frac{U^g}{N}\) is a minimal normal subgroup of \(\frac{H}{N}\).
Let \(\{\frac{U^{g_1}}{N},\ldots,\frac{U^{g_k}}{N}\}\) be the set of all distinct conjugates of \(\frac{U}{N}\). Then, as in Lemma~\ref{3211}, there exists \(1 \le r \le k\) such that
\[
\frac{DN}{N}=\frac{U^{g_1}}{N}\times \cdots \times \frac{U^{g_r}}{N}.
\]
By Lemma~\ref{abb}, each \(\frac{U^{g_i}}{N}\leq \Omega_{p,1}\!\big(LCM(\tfrac{H}{N})\big)\), hence
\[
\frac{DN}{N} \leq \Omega_{p,1}\!\big(LCM(\tfrac{H}{N})\big).
\]
It follows that
\[
o(uvN)\mid lcm(o(uN),o(vN)),
\]
a contradiction.

Therefore, \(G = ND\langle v\rangle\), and thus \(G\) is a \(p\)-group. Consequently, \(G\) is \(LC\)-nilpotent, which is a contradiction.

Hence, \(N\) is the unique minimal normal subgroup of \(G\). Therefore,
\[
G = N \rtimes M
\]
for some maximal subgroup \(M\) of \(G\). By the minimality of \(G\), \(M\) is \(LC\)-nilpotent, so
\[
\frac{G}{N} \cong M
\]
is also \(LC\)-nilpotent, contradicting the assumption.

\noindent 
{\bf Case 2.}
Thus, for all \(x,y\in G\),
\[
LC(\langle xN,yN\rangle)\neq 1.
\]
By the minimality of $G$, it follows that
\(
\frac{G}{N}
\)
is an \(LC\)-nilpotent group.

By Lemma~\ref{semii},
\(
N\leq LC(G).
\)
Furthermore, by Theorem~\ref{lc5}, the group \(\frac{G}{N}\) has no section belonging
to \(\mathfrak L\). Hence,
\[
\frac{G/N}{LC(G)/N}\cong \frac{G}{LC(G)}
\]
also has no section belonging to \(\mathfrak L\).

Applying Theorem~\ref{lc5} once more, we conclude that
\(
\frac{G}{LC(G)}
\)
is an \(LC\)-nilpotent group, so $G$ is an $LC$-nilpotent group, which is our final contradiction.
\end{proof}

The Theorem \ref{mainthe1} gives a more general answer to the Question \ref{question1}. 

 \section{Maximal class}
In this section we classify the $LC$-nilpotent groups of maximal $LC$-class and $LC$-nilpotent groups $G$ such that  $\frac{LC_{i+1}(G)}{LC_i(G)}$ is cyclic for any positive integer $i$.
 
 \begin{thm}
    Let $G$ be a finite non-abelian group. Then $G$ is an $LC$-nilpotent group of maximal $LC$-class if and only if $G$ is a non-abelian group of order $pq$ where $p < q$ are prime numbers.
   \end{thm}
    \begin{proof}
($\Leftarrow$) If $G$ is a non-abelian group of order $pq$, then  $G$ is an $LC$-nilpotent group of maximal $LC$-class.

\noindent ($\Rightarrow$) We proceed by induction on $k$ where $LC_k(G)=G$ and $LC_{k-1}(G)\neq G$. Let $N := LC_1(G)$, and let $|N| = q$ where $q$ is a prime number. Since $G$ is not abelian, $k>1$.

First, suppose that $k = 2$.  Then $|G| = pq$, where $p$ is a prime number. Since $G$ is not abelian, $p < q$. Suppose that $k > 2$. By the induction hypothesis,  $\frac{G}{N}$ is a non-abelian group of order $ps$, where $p \neq s$ are prime numbers.
As $\frac{G}{N}$ is a non-abelian group of order $ps$,   $|G| = pqs$.    We may assume that $p<s$.
If $\operatorname{Fit}(G)\neq N$, then $\operatorname{Fit}(G)$ is an abelian group, as $|\operatorname{Fit}(G)|$ is a product of two prime numbers.
Since $p<s$, we have $|\operatorname{Fit}(G)|=qs$.
If $s=q$ or $s\neq q$, then by Corollary \ref{42}, $\operatorname{Fit}(G)\leq N$, which leads to a contradiction.
Thus $\operatorname{Fit}(G)=N$, and $|\operatorname{Fit}(G)|=q$.
Then $\operatorname{Aut}(N)\cong C_{q-1}$ is a cyclic group and so $\frac{G}{C_G(N)}=\frac{G}{N}$ is an abelian group. Hence, $k=2$, which is a contradiction. 
    \end{proof}
 \begin{lem}
     Let $G\ncong D_8$ be a $2-$group of maximal class, and let $N$ be a normal non-cyclic subgroups of $G$.
     Then $N$ is of maximal class.
 \end{lem}
 \begin{proof}
 We proceed by induction on $|G|$
    Let $F:=\langle f\rangle$ be the cyclic maximal subgroup of $G$.
    Let $D\leq F$ be a subgroup of order two.
    Then 
    $|F\cap N|=|F|/2$. 
    Hence, $N$ has a cyclic subgroup of index  two.
    If $|N|=4$, then 
    $N\cong C_2\times C_2$.
    Since $G\cong D_8$, $G$ does not contain any normal subgroups of type $C_2\times C_2$. Then $|N|>4$.
    If $\frac{G}{D}\ncong D_8$, by induction hypothesis, 
    $\frac{N}{D}$ is of maximal class, therefore
    $N$ is of maximal class.
    If $\frac{G}{D}\cong D_8$, then $|G|=16$, and the proof is complete.
 \end{proof}
 We need the following technical Lemma to prove Theorem \ref{max}.
 \begin{lem}\label{sub4}
     Let $G=N\langle z\rangle$ be a finite group $2$-group where $N$ is a normal cyclic subgroup $N$ of index 4 and $o(zN)=4$.
     If $|\Omega_1(G)|=4$, then $G$ has an element of $v$ of order $4$ such that
     $\langle v\rangle\cap N=1$.
 \end{lem}
 \begin{proof}
     We proceed by induction on $|G|$.
If $|N|=2$, then $N\leq Z(G)$, and
$G$ is an abelian group of order $8$.
Since $o(zN)=4$, $G$ is not cyclic and
$o(z)=4$, it follows $N\cap \langle z\rangle=1$. 
Let $|N|>2$ and put $M=\mho_1(N)\langle z\rangle$.
If $N\leq \langle z\rangle$, then 
$G$ is a cyclic group, which is a contradiction.
Consequently,  $N\nleq \langle z\rangle$, and thus
  $N\cap \langle z\rangle\leq \mho_1(N)$.
  It follows that 
  $4=o(zN)=o(z\mho_1(N)).$
 Since  
 \begin{align*}
8=[G:\mho_1(N)]&=[G:M][M:\mho_1(N)]\\&=2[M:\mho_1(N)],
 \end{align*}
 we have $[M:\mho_1(N)]=4$. 
By the induction hypothesis, 
$M$ has an element $v$ of order $4$ such that $1=\langle v\rangle\cap \mho_1(N)=\langle v\rangle\cap N$.    
 \end{proof}

We need theses auxiliaries results to prove Theorem \ref{max}.

\begin{lem}\label{Berkovich}(Theorem 9.10 \cite{Ber}) If a group $G$ of order $p^m > p^3$ has a subgroup $M$ of order $p^{m-1}$ of maximal class, then $G$ is either of maximal class or $G/G' \cong C_{p^3}$.
    
\end{lem}

\begin{lem}\label{Janko}(Theorem 2.1 \cite{Jank}) Let $G$ be a metacyclic 2-group. Then
\begin{enumerate}
\item[(i)] $G$ contains exactly one involution if and only if $G$ is either cyclic or generalized quaternion.
\item[(ii)] $G$ contains more than three involutions if and only if $G$ is either dihedral or semidihedral.
\item[(iii)] All other metacyclic 2-groups contain exactly three involutions.
\end{enumerate}
    
\end{lem}

 \begin{thm}\label{max}

     Let $G$ be a finite non-abelian  $p$-group of $LC$-class  $k>1$. Then   for each  $ 0\leq i\leq  \ldots\leq  k-1$,   $\frac{LC_{i+1}(G)}{LC_i(G)}$ is cyclic if and only if  $k=2=p$, and  
  $G \in \{D_{2^n}, SD_{2^n}, Q_{2^m}\}$, where $n \geq 3$ and $m \geq 4$.
 \end{thm}
\begin{proof}
($\Leftarrow$) Follows immediately from Corollary \ref{dih}.

\noindent ($\Rightarrow$) Let $N=LC(G)$, and let $G$ be a minimal counterexample.
First suppose that $p>2$.
We claim that $\frac{G}{N}$ is a cyclic group.
If $k>2$, then 
by   minimality of $G$, $\frac{G}{N}\in \{D_{2^n}, SD_{2^n}, Q_{2^m}\}$, where $n \geq 3$ and $m \geq 4$, which is a contradiction.
Thus $k=2$, and $\frac{G}{N}$ 
is a 
cyclic group, as claimed.
Then $G=N\langle b\rangle$ for some $b\in G$.
Hence, $G'\leq N$ is cyclic, and it follows that $G$ is a regular $p$-group. Since all regular $p$-groups are $LCM$-groups, so $G=LCM(G)=N$,   which is a contradiction.

Therefore the only possible case is $p=2.$
  If $|G| = 8$, then clearly $G \cong D_8$. Assume $|G| > 8$.  
First suppose that $k=2$.
Then $G=N\langle b\rangle$, where $\frac{G}{N}=\langle bN\rangle$. 
Let $u\in \langle b\rangle$ 
 such that $o(uN)=2$.
If $N\langle u\rangle$ is abelian, then $u\in Z(G)\leq N$, which is a contradiction.
It follows  $N\langle u\rangle$ is not  abelian.
Hence $N\langle u\rangle$ is of maximal class. First suppose that $N\langle u\rangle\neq G$. Let $v\in \langle b\rangle$ such that $o(vN)=4$, and let $H=\langle v \rangle N$.
As $\frac{H}{H'}\ncong (C_2)^3$, by Lemma \ref{Berkovich}, $H$ is of maximal class. Then $H \in \{D_{2^n}, SD_{2^n}, Q_{2^m}, M_{2^m}\}$, where $n \geq 3$ and $m \geq 4$.
Let $F:=\langle f\rangle\leq H$ be subgroup of index two for some $f\in H$. Since $N\leq F$, we have $F\lhd G$,  and then by Lemma \ref{norm1},  $F\leq LCM(G)=N$, which is a contradiction.
Therefore $N\langle u\rangle=G$ is of maximal class. So $G \in \{D_{2^n}, SD_{2^n}, Q_{2^m}, M_{2^m}\}$  where $n \geq 3$ and $m \geq 4$. Since $G$ is a minimal counterexample
  $G\cong M_{2^m}$. As $M_{2^m}$ is an $LCM$-group, $LCM(G)=G$, which is a contradiction.

 Finally let $k\geq 3$.  By minimality of $G$, $\frac{G}{N}$ is a $LC$-nilpotent group of $LC$-class 2 and 
   $\frac{G}{N} \in \{D_{2^n}, SD_{2^n}, Q_{2^m}\}$, where   $n \geq 3$ and $m \geq 4$.
   Let $z\in G$ such that $o(zN)=|G|/2|N|$, and let $a\in \langle z\rangle$ such that $o(aN)=4$ whenever $\frac{G}{N}\in \{D_{2^n},SD_{2^n}\}$, otherwise  $o(aN)=8$.
   Let $u\in \langle z\rangle$ such that $o(uN)=2$.
We claim that $[u,N]=1$.  Suppose for a contradiction $[u,N]\neq 1$.
Then $H:=N\langle u\rangle$  is of maximal class.  Let $b\in \langle a\rangle$ such that $o(bN)=4$. 
Since $\frac{H\langle b\rangle}{(H\langle b \rangle)'}\ncong C_2^3$, by Lemma \ref{Berkovich}, $H\langle b \rangle=N\langle b\rangle$ is of maximal class. 
 It follows that 
  $N\leq  \langle f\rangle$ where $\langle f\rangle$  is cyclic maximal subgroup of $H\langle b\rangle$.
   Since  $uN=b^2N\subseteq  \langle f\rangle$, we deduce that  $[u,N]=1$, which is a contradiction. Therefore $[u,N]=1$, as claimed.
  Hence, $N\langle u\rangle=N\times \langle x\rangle$ for some $x\in N\langle u\rangle$ of order two. It follows that 
$\Omega_1(N\langle u\rangle)\cong C_2\times C_2$. 

 If $|\Omega_1(N\langle z\rangle)| > 4$, then by Lemma \ref{Janko} (ii), $N\langle z\rangle$ is either dihedral or semidihedral.  
Therefore  $N\langle z^2\rangle \leq \langle f \rangle$, where $\langle f \rangle$ is the cyclic maximal subgroup of $N\langle z\rangle$.  
Since $\langle f \rangle$ in $N\langle z\rangle$ is characteristic, we have $\langle f \rangle \lhd G$.  
It follows from Lemma  \ref{norm1} that  $f \in N$, which leads to a contradiction.
Thus, $|\Omega_1(N\langle z\rangle)| = 4$.  
It follows that $K := \Omega_1(N\langle z\rangle) \cong C_2 \times C_2$.  
According to Lemma \ref{sub4}, there exists $v \in N\langle z\rangle$ such that  
$o(v) = 4$ and $\langle v \rangle \cap N = 1$.

Let $P=N\langle v\rangle\langle w\rangle$. We have $\frac{N}{\mho_1(N)}\leq Z(\frac{P}{\mho_1(N)})$, as 
$|\frac{N}{\mho_1(N)}|=2$.
A GAP computation, \cite{Gap11}, shows that in the set of all non-abelian  finite  groups of order $16$ with  center of order equal to or greater than $4$, the derived subgroup has order two.
Therefore   $\varPhi(P)\leq \mho_1(N)\langle x\rangle$, and thus $[P:\varPhi(P)]=8$.
It follows that
$\mho_1(N)\langle v\rangle \langle w\rangle$ is a   proper subgroup of $P$.
Next we repeat the above argument for 
$\mho_i(N)$ for all $i=1,\ldots,m-1$, where $|N|=2^m$.
Then $$F:=\mho_{m-1}(N)\langle v\rangle \langle w\rangle=\Omega_{1}(N)\langle v\rangle \langle w\rangle$$ is a subgroup of $P$. 
Since $\Omega_1(N)\leq Z(P)$, we have 
$\Omega_1(\Omega_1(N)\langle v\rangle)=K$. 
Therefore   $T:=K\langle w\rangle$ is a subgroup of $G$ of order $8$.
As, $[w,x]\neq 1$,  we conclude that 
$T\cong D_8$.
Hence $F=\langle w,v\rangle$.
Since $T\langle v\rangle=F$, and 
$\frac{F}{F'}\ncong C_2^3$,  $F$ is of maximal class.
Let $\langle f\rangle$ be the cyclic maximal subgroup of $F$.
Then $N\langle x\rangle \leq \langle f\rangle$, which is our final contradiction.
\end{proof}
  \begin{thm}\label{meta}
     Let $G$ be a finite group. Then $G$ is an $LC$-nilpotent group of $LC$-class $k$ such that $\frac{LC_{i+1}(G)}{LC_i(G)}$ is cyclic for all $i = 0, \ldots, k-1$ if and only if  $k=2$ and $G = LC_1(G)H$, where $H$ is a cyclic subgroup of $G$.  
   \end{thm}
 \begin{proof}
($\Leftarrow$) This direction is straightforward. 
 
\noindent ($\Rightarrow$) Let $N = LC_1(G)$. Since $N$ is a nilpotent group, $N\leq Fit(G)$.  We proceed by induction on $|G|$. If $k = 2$, then
$\frac{G}{N}=\frac{LC_2(G)}{N}=\langle yN\rangle$ for some $y\in G$. It follows that 
$G = N \langle y \rangle$.

Next suppose $k > 2$. By the induction hypothesis, $\frac{G}{N} = \frac{LC_2(G)}{N} \frac{H}{N}$, where $\frac{LC_2(G)}{N} = \langle vN \rangle$ and $\frac{H}{N} = \langle hN \rangle$ are cyclic groups. Then $k = 3$.
 
Let $P\in \operatorname{Syl}_p(\operatorname{Fit}(G))$ and let $R\in \operatorname{Syl}_p(LC_2(G))$.
There exists $r\in LC_2(G)$ such that $R=(N\cap R)\langle r\rangle$.
Then $R'$ is cyclic, thus $R$ is a regular group.
By Corollary \ref{42}, $P \leq N$. 
It follows that $\operatorname{Fit}(G)=\langle b\rangle \times S$ where $S$ is Sylow $2$-subgroup of $\operatorname{Fit}(G)$ and $o(b)$ is an odd  number. We consider the following two cases:

{\bf Case 1.}
 If  $S$ is a cyclic group,    then 
 by Lemma \ref{norm1},
 $S\leq N$, so $\operatorname{Fit}(G)=N$ 
 Therefore 
$\frac{G}{C_G(\operatorname{Fit}(G))}=\frac{G}{\operatorname{Fit}(G)}$ is isomorphic to a subgroup of abelian subgroup of $\operatorname{Aut}(\operatorname{Fit}(G))$.
Hence $LC(\frac{G}{\operatorname{Fit}(G)})=\frac{G}{\operatorname{Fit}(G)}$, which is a contradiction.

{\bf Case 2.}
Therefore $S$ is not a cyclic group. 
 If $Q$ is an $LCM$-group, then by Corollary \ref{42}, we have $S \leq N$.  
Thus, $S$ is a cyclic group, which leads to a contradiction.  
Consequently, $Q$ is not an $LCM$-group.

 If $LC_2(Q)\neq Q$, then 
 $LC_3(Q)=Q$, as $LC_3(G)=G$.
 Also, $Q=\langle v_1\rangle \langle v_2\rangle\langle v_3\rangle$ where $v_i\in LC_i(G)$ for $i=1,2,3$ and $v_3\not\in LC_2(G)$. 
It follows from  $S\leq \langle v_1\rangle \langle v_2\rangle$ that $S$ is a  metacyclic group.
If $|\Omega_1(S)|>4$, then by Lemma \ref{Janko} (ii),  $S$ is of maximal class, thus
 $Z(S)$ is a cyclic group. 

By Lemma \ref{norm1},
 $Z(S)\leq N$.
 Since $G$ is a solvable group, $C_G(Fit(G))\leq Z(Fit(G))$.
 As $S$ is a $2$-group of maximal class and $Fit(G)=\langle b\rangle \times S$, we conclude that  $C_G(Fit(G))=C_G(b)\times C_G(Z(S))$.
 Therefore 
$\frac{G}{C_G(\langle b\rangle \times Z(S))}=\frac{G}{\operatorname{Fit}(G)}$ is isomorphic to a subgroup of abelian subgroup of $\operatorname{Aut}(\langle b\rangle \times Z(S))$.
Hence $LC(\frac{G}{\operatorname{Fit}(G)})=\frac{G}{\operatorname{Fit}(G)}$,
so $k=2$, which leads to a contradiction.

Therefore, $|\Omega_1(S)| = 4$. 
Let $\varphi(S) = D$, then 
\[
\operatorname{Fit}\left(\frac{G}{D}\right) = \frac{\operatorname{Fit}(D)}{D}.
\]
Also, since $S$ is a metacyclic group, we have 
\[
\frac{S}{D} \cong C_2 \times C_2.
\]

First, suppose that 
\[
\frac{G/D}{C_{G/D}\left(\frac{S}{D}\right)} \cong S_3.
\]
Let $yD, vD \in \frac{G}{D}$ such that $o(yD) = 3$ and $o(vD) = 2$. As,  $|\frac{S}{D} \langle yD, vD \rangle|=24$, $\frac{S}{D}\lhd \frac{S}{D} \langle yD, vD \rangle$ and  $\frac{S}{D} \langle yD\rangle\lhd \frac{S}{D} \langle yD, vD \rangle$, we conclude that    
\[
\frac{S}{D} \langle yD, vD \rangle \cong S_4
\]
is a $2$-Frobenius group of type $(2, 3, 2)$, which implies that $G$ is not an $LC$-nilpotent group, which is a contradiction.

Hence, 
\[
\left|\frac{G/D}{C_{G/D}\left(\frac{S}{D}\right)}\right| \in \{1, 2, 3\}.
\]

We also have 
\[
\frac{G}{\operatorname{Fit}(G)} \cong \frac{G/D}{\operatorname{Fit}(G)/D} = \frac{G/D}{C_{G/D}\left(\frac{\operatorname{Fit}(G)}{D}\right)}.
\]

Since 
\[
C_{G/D}\left(\frac{\operatorname{Fit}(G)}{D}\right) = C_{G/D}(bD) \cap C_{G/D}\left(\frac{S}{D}\right),
\]
and there exists a monomorphism from $\frac{G}{Fit(G)}$ into the cyclic group 
\[
\frac{G/D}{C_{G/D}(bD)} \times \frac{G/D}{C_{G/D}\left(\frac{S}{D}\right)},
\]
we deduce that $\frac{G}{\operatorname{Fit}(G)}$ is a cyclic group. Hence, $G' \leq \operatorname{Fit}(G)=NS$. 
Since $\frac{NS}{N}$ is a cyclic group, by Lemma \ref{norm1}, we have 
\[
\frac{NS}{N} \leq LC\left(\frac{G}{N}\right).
\]
Hence, $\frac{NS}{N}$ is cyclic.

If $Q \leq LC_2(G)$, then $\frac{QN}{N} = \langle uN \rangle$ is cyclic, implying 
\[
Q = (N \cap Q)\langle u \rangle.
\]  
Let $v \in Q$ such that $o(vN) = 2$. Since $v \notin LC(G)$, there exists a $2$-element $w \in G$ such that $o(vw) \nmid lcm(o(v), o(w))$. Let $M = (N \cap Q)\langle v, w \rangle$. 

Since $\frac{Q}{N \cap Q}$ is cyclic and $N \cap M \leq LC(M)$, it follows that $\frac{M}{LC(M)}$ is cyclic. By the induction hypothesis, $Q$ is of maximal class, so $S$ is cyclic, which leads to a contradiction.

Thus, $Q \nleq LC_2(G)$. Let $M = NSQ$. Since $G' \leq NS \lhd G$, we conclude that $M \lhd G$. Hence, $M$ contains all $2$-elements of $G$. It follows that 
\[
Q \cap LC(M) = Q \cap N \quad \text{and} \quad Q \cap LC_2(M) = Q \cap LC_2(G),
\]
as $S \nleq N$. Therefore, $LC(M) = N$ and $LC_2(M) = LC_2(G) \cap M$.

If $M \neq G$, then by the induction hypothesis, $M$ is a $2$-group of maximal class, so $S$ is cyclic,  which is a contradiction. Thus, $M = G$.

By Theorem \ref{max}, we have 
\[
\frac{QN}{N} \in \{D_{2^n}, SD_{2^n}, Q_{2^m}\}, \quad \text{for } n \geq 3 \text{ and } m \geq 4.
\]

Let $\langle fN \rangle$ be the maximal cyclic subgroup of $\frac{QN}{N}$. Then $f \in S$, and thus $S$ is of maximal class, leading to a contradiction. The proof is complete.
 \end{proof}

\subsection*{Funding}
The first author was partially supported by FAPDF, Brazil. The third author was partially supported by DPI/UnB, NSF of China (Grant No. 12161035) and FEMAT Proc. 054/2022, FAPDF, Brazil.

\subsection*{Acknowledgements}
We thank Pavel Shumyatsky for pointing out a mistake in the statement of  Theorem \ref{lc5}. We also thank the referees for their careful reading and suggestions.

\medskip
{\it Author's Addresses:\\}
\small
Mohsen Amiri\\
Universidade Federal de Uberl\^andia,\\
Faculdade de Matem\'atica - UFU\\  
38.408-902, Uberl\^andia-MG, Brazil\\

Iryna Kashuba\\
Shenzhen International Center for Mathematics,\\
Southern University of Science and Technology\\  
518055, Shenzhen, China\\

Igor Lima,\\
Universidade de Bras\'ilia,\\
Departamento de Matem\'atica\\
70910-900 Bras\'ilia - DF, Brazil

\end{document}